\numberwithin{equation}{section} \allowdisplaybreaks
\begin{document}
\newtheorem{theorem}{Theorem}[section]
\newtheorem{defin}{Definition}[section]
\newtheorem{prop}{Proposition}[section]
\newtheorem{corol}{Corollary}[section]
\newtheorem{lemma}{Lemma}[section]
\newtheorem{rem}{Remark}[section]
\newtheorem{example}{Example}[section]
\title{On hypersurfaces of generalized K\"ahler manifolds}
\author{{\small by}\vspace{2mm}\\Izu Vaisman}
\date{}
\maketitle
{\def\thefootnote{*}\footnotetext[1]%
{{\it 2010 Mathematics Subject Classification: 53C15} .
\newline\indent{\it Key words and phrases}: generalized K\"ahler structure, generalized CRFK structure, generalized almost contact structure.}}
\begin{center} \begin{minipage}{12cm}
A{\footnotesize BSTRACT. We establish the conditions for the induced generalized metric F structure
of an oriented hypersurface of a generalized K\"ahler manifold to be a generalized CRFK structure. Then, we discuss a notion of generalized almost contact structure on a manifold $M$ that is suggested by the induced structure of a hypersurface. Such a structure has an associated generalized almost complex structure on $M\times\mathds{R}$. If the latter is integrable, the former is normal and we give the corresponding characterization. If the structure on $M\times\mathds{R}$ is generalized K\"ahler, the structure on $M$ is said to be binormal. We characterize binormality and give an example of binormal structure.}
\end{minipage}
\end{center} \vspace{5mm}
\section{Introduction}
The framework of this note is the $C^\infty$ category and the notation is classical \cite{KN}. We refer the reader to \cite{Bl} for the almost contact geometry and to \cite{Gualt} for generalized geometry. As a precaution, the differentiable manifolds that we consider are assumed to be connected and the submanifolds are connected embedded submanifolds.

It is well known \cite{{Bl},{T}} that, if $M^{2n}$ is an almost Hermitian manifold and $\iota:N\subseteq M$ is an oriented hypersurface in $M$, one can define a corresponding induced, metric, almost contact structure of $N$. This structure was studied by many authors. In particular, conditions for the induced structure to be normal were given in \cite{{Ok},{T},{YI}}.

In the present paper we study similar problems in generalized geometry. We show that a generalized almost Hermitian structure on $M$ induces a generalized metric F structure on the hypersurface $N$ and we discuss questions related to the integrability of the latter. In the classical situation, we find conditions for the induced structure to be classical CRF in the sense of \cite{V1}, which is a weaker property than normality. Then, we consider the case where $M$ is a generalized K\"ahler manifold and we establish the conditions for the induced structure to be a generalized CRFK structure in the sense of \cite{V1}. In the previous terminology, CR stands for Cauchy-Riemann (e.g., \cite{Bl}, F stands for the notion of an F structure \cite{Y0} and K indicates a connection with K\"ahler geometry.

The induced structure of a hypersurface of a generalized almost Hermitian manifold belongs to a particular class of generalized F structures of a manifold $M$ that naturally extend the classical almost contact structures to the generalized framework. This extension is different from the generalized almost contact structures of \cite{{IW},{PW},{V3}} but, it is equivalent to those of Sekiya \cite{Sek}. Using an associated generalized almost complex structure on $M\times\mathds{R}$, we define corresponding notions of normality and binormality and establish the conditions that characterize normal and binormal structures. The notion of a generalized binormal structure is equivalent to that of generalized coK\"ahler structure studied in \cite{GT}.

In the whole paper we use the following important notational convention: formulas that include the double signs $\pm,\,\mp$ contain two cases, the case of all the upper signs and the case of all the lower signs; mixing between upper and lower signs is not included.
\section{Preliminaries}
In this section we recall the definition of some of the structures that we shall encounter later on.
{\it Classical structures} are defined on $TM$ and {\it generalized structures} are defined on the {\it big tangent bundle} $\mathbf{T}M=TM\oplus T^*M$. $TM$ is a {\it Lie algebroid} with the anchor $Id:TM\rightarrow TM$ and the Lie bracket $[\,,\,]$, while $\mathbf{T}M$ is a {\it Courant algebroid} (e.g., \cite{Gualt}) with the anchor $pr_{TM}$, the neutral pairing metric
$$g((X,\alpha),(Y,\beta))=\frac{1}{2}(\alpha(Y)+\beta(X))\;
(X,Y\in TM,\alpha,\beta\in T^*M)$$
and the Courant bracket
$$
[\mathcal{X},\mathcal{Y}]=([X,Y],(L_X\beta-L_Y\alpha+\frac{1}{2}d(\alpha(Y)-\beta(X))).
$$
Calligraphic characters denote pairs $\mathcal{X}=(X,\alpha),\mathcal{Y=}(Y,\beta)$, etc. Among the properties of the Courant bracket we notice
\begin{equation}\label{propofC}
[\mathcal{X},f\mathcal{Y}]=f[\mathcal{X},\mathcal{Y}]+pr_{TM}\mathcal{X}(f)\mathcal{Y} -g(\mathcal{X},\mathcal{Y})\partial f,\end{equation}
where $f\in C^\infty(M)$ and $\partial f$ is defined by $g(\partial f, \mathcal{U})=(1/2)pr_{TM}\mathcal{U}(f)$, $\forall\mathcal{U}\in\mathbf{T}M$.

A classical almost contact structure is a triple $(F\in End(TM),Z\in\chi(M),\xi\in\Omega^1(M))$ such that
\begin{equation}\label{almcont} F^2=-Id+\xi\otimes Z,\;FZ=0,\; \xi\circ F=0,\;\xi(Z)=1. \end{equation}
The almost contact structure $(F,Z,\xi)$ on $M$ is equivalent with the almost complex structure defined on $M\times\mathds{R}$ by
\begin{equation}\label{JF}
J=F-dt\otimes Z+\xi\otimes\frac{\partial}{\partial t},\;\; t\in\mathds{R}.\end{equation}

A tensor field $F\in End(TM)$ is an F structure if $F^3+F=0$. In particular,	(\ref{almcont}) implies that, if $(F,Z,\xi)$ is an almost contact structure, then, $F$ is an F structure.

Furthermore, $(F,Z,\xi,s)$, where $s$ is a Riemannian metric, is a metric almost contact structure if
\begin{equation}\label{clasmetric}s(FX,FY)=s(X,Y)-\xi(X)\xi(Y).\end{equation} Condition (\ref{clasmetric}) implies $Z\perp_s\,im\,F$, $\xi=\flat_sZ$, $||Z||_s=1$ and
\begin{equation}\label{Fmetric} s(FX,Y)+s(X,FY)=0,\end{equation}
where the musical isomorphism has the usual definition $(\flat_sZ)(X)=s(Z,X)$.
A pair $(F,s)$, where $F$ is an F structure and $s$ a Riemannian metric is a metric F structure if (\ref{Fmetric}) holds.
Then, $\Xi(X,Y)=s(FX,Y)$ is a $2$-form called the fundamental form.

An almost contact structure of $M$ is normal iff the corresponding almost complex structure of $M\times\mathds{R}$ is integrable and the normality condition is\footnote{For exterior products and differentials we use Cartan's convention, e.g.,
$$\alpha\wedge\beta(X,Y)=\alpha(X)\beta(Y)-\alpha(Y)\beta(X),
\;d\alpha(X,Y)=X\alpha(Y)-Y\alpha(X)-\alpha([X,Y]).$$}
\begin{equation}\label{normal} \mathcal{N}_F+d\xi\otimes Z=0,\end{equation}
where $\mathcal{N}_F$ is the Nijenhuis tensor
\begin{equation}\label{Nijtens}
\mathcal{N}_F(X,Y)=[FX,FY]-F[FX,Y]-F[X,FY]+F^2[X,Y].\end{equation}

An F structure $F$ has the eigenvalues $\pm i,0$ with corresponding eigenbundles $H,\bar{H},Q=ker\,F$ such that $T_cM=H\oplus\bar{H}\oplus Q_c$, $im\,F=P$ where $P_c=H\oplus\bar{H}$ (the index $c$ denotes complexification). The projections of $T_cM$ on the eigenbundles are given by
\begin{equation}\label{proneigen} pr_H=-\frac{1}{2}(F^2+iF),\,pr_{\bar{H}}=-\frac{1}{2}(F^2-iF),\, pr_Q=Id+F^2,\,pr_P=-F^2.\end{equation}

If the eigenbundle $H$ is involutive, it defines a CR structure and the F structure (or the almost contact structure that includes $F$, if given) are said to be of the CR type. Using eigenvectors as arguments, we see that the CR type condition is
\begin{equation}\label{CRcond} N_F(X,Y)=pr_Q[X,Y],\;\forall X,Y\in P.\end{equation}
In the case of an almost contact structure, normality implies CR type.

A {\it generalized almost complex structure} is $\mathcal{J}\in End(\mathbf{T}M)$ such that
$$
g(\mathcal{J}\mathcal{X},\mathcal{Y}) +
g(\mathcal{X},\mathcal{J}\mathcal{Y})=0,\;
\mathcal{J}^2=-Id,\;
\forall\mathcal{X},\mathcal{Y}\in\Gamma \mathbf{T}M
$$ ($\Gamma$ denotes the space of sections of a vector bundle).
The structure is {\it integrable} or {\it generalized complex} if $\mathcal{N}_{\mathcal{J}}(\mathcal{X},\mathcal{Y})=0$, where the Nijenhuis tensor is also ''generalized", i.e., defined by (\ref{Nijtens}) with Courant brackets instead of Lie brackets. The integrability condition is equivalent to the closure of $\Gamma(\mathcal{L})$ under Courant brackets, where $\mathcal{L}$ is the $i$-eigenbundle of $\mathcal{J}$.

A {\it generalized F structure} is $\mathcal{F}\in End(\mathbf{T}M)$ such that
$$
g(\mathcal{F}\mathcal{X},\mathcal{Y}) +
g(\mathcal{X},\mathcal{F}\mathcal{Y})=0,\;
\mathcal{F}^3+\mathcal{F}=0.$$
Then, $\mathcal{F}$ has the eigenvalues $\pm i,0$ with corresponding eigenbundles $E,\bar{E},S=ker\,\mathcal{F}$. We also consider the subbundle $L=im\,\mathcal{F}\subseteq \mathbf{T}M$. Obviously, $L_c=E\oplus\bar{E}$, $L\cap S=0$, $L\perp_g S$, $\mathbf{T}_cM=E\oplus\bar{E}\oplus S_c=(L\oplus S)_c$ and the restrictions of $g$ to $L$ and $S$ are non degenerate. The projections are now given by (\ref{proneigen}) with $F\mapsto\mathcal{F},H\mapsto E,Q\mapsto S,P\mapsto L$. The rank of the vector bundle $S$ is called the {\it corank} of $\mathcal{F}$ and the negative inertia index of $g|_S$ is called the {\it negative index} $neg\,\mathcal{F}$ of $\mathcal{F}$.

Integrability of the generalized F structure is defined by the requirement that $\Gamma(E)$ is closed under Courant brackets and, if this happens, the structure is called a {\it CRF structure}. The integrability condition is equivalent to \cite{V1}
$$ \mathcal{N}_{\mathcal{F}}(\mathcal{X},\mathcal{Y})=
pr_S[\mathcal{X},\mathcal{Y}],\;\forall\mathcal{X},\mathcal{Y}\in\Gamma L. $$

If $\mathcal{F}(X,\alpha)=(FX,-\alpha\circ F)$, where $F$ is an F structure on $M$ and if integrability holds, $F$ is a {\it classical CRF structure}. The classical CRF condition is equivalent to \cite{V1}
\begin{equation}\label{integrcuH} [H,H]\subseteq H,\,[H,Q_c]\subseteq H\oplus Q_c,\end{equation}
where the brackets are Lie brackets.
The first condition (\ref{integrcuH}) shows that $F$ is of the CR type, hence, condition (\ref{CRcond}) holds. Furthermore \cite{V1}, $F$ is classical CRF iff it is of the CR type and
\begin{equation}\label{CRF0} \mathcal{N}_F(X,Y)=0,\;\forall X\in P, Y\in Q.\end{equation}
(All these conditions are easily checked using eigenvectors as arguments.)

The previous definitions may be applied to an almost contact structure $(F,Z,\xi)$ asking the CR, respectively CRF, condition to hold for the component $F$ of the structure. Then, $Q=span\{Z\}$ and condition (\ref{CRF0}) becomes
\begin{equation}\label{CRFcuLie} F[Z,FX]-F^2[Z,X]=0\,\Leftrightarrow\,F\circ(L_ZF)=0\;(X\in TM).
\end{equation}

Condition (\ref{CRFcuLie}) has the following geometric interpretation. $F|_P$ defines complex structures $\tilde{F}_x$ in $T_xM/span\{Z_x\}$, $\forall x\in N$. These structures yield a transversal almost complex structure $\tilde{F}$ of the foliation defined by the trajectories of $Z$ iff the tensor field $F$ is projectable, meaning that for every projectable vector field $X\in\chi(M)$ (one such that $[X,Z]\in span\{Z\})$ the vector field $FX$ is again projectable ($[FX,Z]\in span\{Z\}$).
It	easily follows that projectability of $F$ is equivalent to (\ref{CRFcuLie}). Furthermore, by using projectable vector fields as arguments, we see that the CR condition is equivalent to the integrability of $\tilde{F}$. We conclude that an almost contact structure $(F,Z,\xi)$ is classical CRF iff $F$ defines an induced transversal, complex (holomorphic) structure of the foliation $span\{Z\}$.

Alternatively, the almost contact structure $(F,Z,\xi)$ of $M$ is equivalent to the almost complex structure $J$ given by (\ref{JF}) and $F$ is classical CRF iff $J$ induces a transversal holomorphic structure of the foliation $span\{Z,\partial/\partial t\}$.
\begin{rem}\label{obsctnormal} {\rm Taking separately in the normality condition (\ref{normal}) the cases $X,Y\in P$ and $X\in P,Y=Z$, it follows that an almost contact structure is normal iff it is of the CR type and $L_ZF=0$. Therefore, a normal structure is classical CRF.}\end{rem}

We shall also need the notion of a {\it generalized Riemannian metric} on a manifold $M$ \cite{Gualt}. This is a Euclidean
(positive definite) metric $G$ on $\mathbf{T}M$ together with a $G$-orthogonal
decomposition
$$\mathbf{T}M=V_+\oplus V_-,$$
where $V_\pm$ are maximal $g$-positive, respectively $g$-negative, $g$-orthogonal subbundles
and $G$ is equal to $\pm g$ on $V_\pm$. (In fact, the decomposition defines the metric by putting $G=g|_{V_+}-g|_{V_-}$.)

In \cite{Gualt}, it was shown that $G$ is equivalent with a pair
$(\gamma,\psi)$, where $\gamma$ is a usual Riemannian metric on $M$
and $\psi\in\Omega^2(M)$. This equivalence is realized by putting
\begin{equation}\label{exprEpm}
V_{\pm}=\{(X,\flat_{\psi\pm\gamma}X)\,/\,X\in TM\}.\end{equation}
Formula (\ref{exprEpm}) shows the existence of the isomorphisms
$\tau_\pm:V_\pm\rightarrow TM$ given by
$\tau_\pm(X,\flat_{\psi\pm\gamma}X)=X$, and $\tau_\pm$ may be used to
transfer structures between $V_\pm$ and $TM$. In particular, the two
metrics $G|_{V_\pm}$ transfer to $\gamma$. The bundles $V_\pm$ may be seen as the $\pm1$-eigenbundles of an isomorphism $\mathcal{G}:\mathbf{T}M\rightarrow\mathbf{T}M$, which satisfies the conditions
\begin{equation}\label{condptGrond}\begin{array}{c}\mathcal{G}^2=Id, \; g(\mathcal{G}\mathcal{X},\mathcal{G}\mathcal{Y})=g(\mathcal{X},\mathcal{Y}) \vspace*{2mm}\\ G(\mathcal{G}\mathcal{X},\mathcal{Y})=g(\mathcal{X},\mathcal{Y}). \end{array}\end{equation}
Conversely, if $\mathcal{G}$ satisfies the first two conditions (\ref{condptGrond}), $\mathcal{G}$ has the eigenvalues $\pm1$ and two $g$-orthogonal eigenbundles $V_\pm$, where the restriction of $g$ is non degenerate. Furthermore, if $g|_{V_+}$ is positive definite, the second formula (\ref{condptGrond}) yields a generalized Riemannian metric $G$.

A {\it generalized almost Hermitian structure} is a pair $(G,\mathcal{J})$, where $G$ is a generalized Riemannian metric, $\mathcal{J}$ is a generalized almost complex structure and
$$ G(\mathcal{J}\mathcal{X},\mathcal{J}\mathcal{Y})= G(\mathcal{X},\mathcal{Y})\;\;(\mathcal{X},\mathcal{Y}\in\mathbf{T}M).
$$
Then, $\mathcal{J}$ defines the following two classical almost complex structures on $M$
\begin{equation}\label{eqtransfer}J_\pm=\tau_\pm\circ\mathcal{J}|_{V_\pm}\circ\tau_\pm^{-1}
\end{equation}
that are compatible with the metric $\gamma$ given by $G\Leftrightarrow(\gamma,\psi)$ and one has an equivalence between $(G,\mathcal{J})$ and the quadruple $(\gamma,\psi,J_\pm)$ \cite{Gualt}. In particular, formula (\ref{eqtransfer}) yields the expression of $\mathcal{J}$
\begin{equation}\label{eqJrond} \mathcal{J}(X,\flat_{\psi\pm\gamma}X)=(J_\pm X,\flat_{\psi\pm\gamma}J_\pm X).\end{equation}

If the structure $\mathcal{J}$ is integrable, $(G,\mathcal{J})$ is a {\it generalized Hermitian structure}. A more interesting structure \cite{{Gualt},{V3}} is obtained by noticing that
the pair $(G,\mathcal{J})$ produces a {\it complementary} generalized almost Hermitian structure $(G,\mathcal{J}')$ associated to the quadruple $(\gamma,\psi,J_+,-J_-)$. In view of (\ref{eqtransfer}) $\mathcal{J}\circ\mathcal{J}'=\mathcal{J}'\circ\mathcal{J}$, which implies that the $i$-eigenbundles $\mathcal{L},\mathcal{L}'$ of $\mathcal{J},\mathcal{J}'$ are
$$\mathcal{L}=(\mathcal{L}\cap V_+)\oplus(\mathcal{L}\cap V_-),\,\mathcal{L}'=(\mathcal{L}\cap V_+)\oplus(\bar{\mathcal{L}}\cap V_+).$$
The structure $(G,\mathcal{J},\mathcal{J}')$ is a {\it generalized K\"ahler structure} if the two structures $\mathcal{J},\mathcal{J}'$ are integrable and the generalized K\"ahler condition is equivalent to the following two properties: (i) the pairs $(\gamma,J_\pm)$ are Hermitian structures, (ii) for the Hermitian structures $(\gamma,J_\pm)$ one has
\begin{equation}\label{relpsiJ} \gamma(\nabla^\gamma_XJ_\pm(Y),U)=\mp\frac{1}{2}[d\psi(X,J_\pm Y,U)+d\psi(X,Y,J_\pm U)],
\end{equation}
where $\nabla^\gamma$ is the Levi-Civita connection of $\gamma$.

For generalized F structures the corresponding notions were discussed in \cite{V1}. A pair $(G,\mathcal{F})$, where $\mathcal{F}$ is a generalized F structure and $G$ is  a generalized Riemannian metric, is a {\it generalized metric F structure} if
\begin{equation}\label{G-F}
G(\mathcal{F}\mathcal{X},\mathcal{Y})+G(\mathcal{X},\mathcal{F}\mathcal{Y})=0.\end{equation}
As in the case of generalized almost Hermitian structures, it follows that the pair $(G,\mathcal{F})$ is equivalent to a quadruple $(\gamma,\psi,F_\pm)$, where $(\gamma,F_\pm)$ are classical metric F structures (i.e., $\gamma(FX,Y)+\gamma(X,FY)=0$). The endomorphism $\mathcal{F}$ is expressed by (\ref{eqJrond}) with $F_\pm$ instead of $J_\pm$.

Furthermore, the quadruple $(\gamma,\psi,F_+,-F_-)$ defines a second generalized metric F structure $(G,\mathcal{F}')$ and $(G,\mathcal{F})$ is a {\it generalized CRFK structure}	 if $\mathcal{F},\mathcal{F}'$ are generalized CRF structures (both are integrable) and
$$[V_+\cap S,V_-\cap S]\subseteq S,$$ where the brackets are Courant brackets and $S=ker\,\mathcal{F}$ \cite{V1}.

In \cite{V1} it was proven that $(G,\mathcal{F})$ is a generalized CRFK structure iff: (1) the corresponding structures $F_\pm$ are classical CRF structures, (2) one has the equalities
\begin{equation}\label{CRFK6} \gamma(F_\pm\nabla^\gamma_XF_\pm(Y),U)= \pm\frac{1}{2}[d\psi(X,Y,F^2_\pm U)+d\psi(X,F_\pm Y,F_\pm U)],\end{equation}
$\forall X,Y,U\in TM$.
\begin{example}\label{CRFKclasic} {\rm A classical, metric F structure $(F,s)$ may be identified with the generalized structure defined by the quadruple $(F_+=F_-=F,\gamma=s,\psi=0)$. This structure is CRFK iff $F$ is classical CRF and, $\forall X,Y\in TM$, $\nabla^s_XF(Y)\in ker\,F$. Then, we will say that $(M,F,s)$ is a {\it classical CRFK manifold}. For instance, in the case of a cosymplectic manifold in the sense of Blair, i.e., a metric almost contact manifold $(F,Z,\xi,s)$ such that $\nabla^s F=0$, $(M,F,s)$ is classical CRFK. Indeed, the above kernel condition holds trivially. On the other hand, we also have $\nabla^s Z=0$ \cite{Bl}, which implies $L_ZF=0$, therefore, the structure $F$ is classical CRF.}\end{example}
\begin{rem}\label{partialK} {\rm Condition (\ref{relpsiJ}) shows that a generalized K\"ahler manifold with a closed form $\psi$ is a {\it bi-K\"ahlerian manifold}, i.e., the two Hermitian structures $(\gamma,J_\pm)$ are K\"ahler structures. Similarly, Proposition 4.7 of \cite{V1} shows that a generalized CRFK manifold with a closed form $\psi$ is a {\it partially bi-K\"ahlerian manifold}, meaning that $\gamma$ has two de Rham decompositions into a sum of metrics where one term is K\"ahlerian.}\end{rem}
\section{Induced structures of hypersurfaces}
Let $M^{2n}$ be an almost Hermitian manifold with the almost complex structure $J$ and the metric $\gamma$. Let $\iota:N\subseteq M$ be an oriented hypersurface with the unit normal vector field $\nu$. Then, the decomposition
\begin{equation}\label{strind1}JX=FX+\xi(X)\nu,\; X\in TN,\, F\in End(TN),\xi\in\Omega^1(N)\end{equation}
defines an almost contact structure $(F,\xi,Z=-J\nu)$ on $N$.
Moreover, if $s=\iota^*\gamma$, $(F,Z,\xi,s)$ is a metric almost contact structure on $N$
and its fundamental form $\Xi(X,Y)=s(FX,Y)$ is given by
$\Xi=\iota^*\Omega$, where $\Omega(X,Y)=\gamma(JX,Y)$ is the K\"ahler form of $(\gamma,J)$ \cite{{Bl},{T}}.

The definition of the induced structure yields the following simple result.
\begin{prop}\label{prop1Killing} If $(M,J,\gamma)$ is a Hermitian manifold and if the hypersurface $N$ is orthogonal to a holomorphic Killing unit vector field $U$ of $M$, then, the induced metric almost contact structure of $N$ is normal.\end{prop}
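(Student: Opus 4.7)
The plan is to use the flow of $U$ to identify a tubular neighborhood of $N$ in $M$ with $N\times(-\varepsilon,\varepsilon)$, and to show that under this identification the ambient $J$ becomes precisely the almost complex structure $\tilde J$ on $N\times\mathbb R$ associated to the induced almost contact structure $(F,Z,\xi)$ via formula (\ref{JF}). Since $(M,J)$ is Hermitian, $J$ is integrable; integrability is preserved by diffeomorphism, so $\tilde J$ is integrable, which is equivalent to normality of $(F,Z,\xi)$.

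Concretely, I would set $\nu=U$ along $N$ (unit and normal by hypothesis), denote by $\phi_t$ the flow of $U$, and define $\Phi(x,t)=\phi_t(x)$ on $N\times(-\varepsilon,\varepsilon)$. Transversality of $U$ to $N$ makes $\Phi$ a local diffeomorphism onto a tubular neighborhood of $N$. The first step is to check that $\Phi^*J=\tilde J$ on $N\times\{0\}$: there $\Phi_*$ is the identity on $TN$ and maps $\partial/\partial t$ to $\nu$, so the decomposition (\ref{strind1}) of $J$ and the definition $Z=-J\nu$ match the expression (\ref{JF}) of $\tilde J$ pointwise.

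The second step propagates this equality to all $t$. The map $\Phi$ intertwines the translation flow $\psi_t(x,s)=(x,s+t)$ with $\phi_t$, i.e.\ $\Phi\circ\psi_t=\phi_t\circ\Phi$; since $U$ is holomorphic, $\phi_t^*J=J$, and these two facts combine to give $\psi_t^*(\Phi^*J)=\Phi^*J$. On the other hand, $\tilde J$ is manifestly $\psi_t$-invariant from its expression in terms of the $t$-independent data $F,Z,\xi,dt,\partial/\partial t$. Two $\psi_t$-invariant tensors that agree on $N\times\{0\}$ must agree on the whole neighborhood, hence $\Phi^*J=\tilde J$. Integrability of $J$ then transfers to $\tilde J$, which by the equivalence discussed after (\ref{JF}) is exactly the normality of $(F,Z,\xi)$.

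The only mildly subtle point is this $t$-propagation, and it rests entirely on $L_UJ=0$; the Killing condition is a natural companion (it guarantees that $\Phi$ is an isometry onto its image, so the induced metric structure is also compatible along the orbits) but is not logically required for the integrability argument itself.
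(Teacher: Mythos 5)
Your proof is correct and follows essentially the same route as the paper's: both use the flow of the unit field $U$ to build local coordinates in which the ambient $J$ coincides with the almost complex structure (\ref{JF}) determined by the induced structure $(F,Z,\xi)$, propagate this identity along the flow via $L_UJ=0$, and read off normality from the integrability of $J$. Your closing observation that the Killing hypothesis is not logically needed for the integrability argument is also accurate; the paper invokes it only to obtain $\flat_\gamma U=dt$ and the invariance of $\gamma$ under the flow, neither of which enters the normality conclusion.
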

\begin{proof}
The normality condition (\ref{normal}) is a local type condition on $N$. Hence, we may prove the proposition by a local type argument. Since $U$ is nowhere zero and $N$ is transversal to $U$, $\forall x\in N$, there exists an open neighborhood $\mathcal{V}_x\subseteq M$ endowed with local coordinates $(x^u,t)$, $u=1,...,2n-1$, such that $U|_{\mathcal{V}_x}=\partial/\partial t$ and $\mathcal{U}_x=N\cap\mathcal{V}_x$ has the equation $t=0$. Thus, we may assume that $\mathcal{V}_x\approx\mathcal{U}_x\times(-\epsilon_x,\epsilon_x)$ ($0<\epsilon_x\in\mathds{R}$).
The condition $||U||_\gamma=1$ means that $t$ is the arc length on the trajectories of $U$ and it implies $\mu=\flat_\gamma U=dt$.

The definition (\ref{strind1}) of the induced structures implies that the expression of $J$ on $\mathcal{U}_x$ is exactly (\ref{JF}) with $\mathds{R}$ replaced by the interval $(-\epsilon_x,\epsilon_x)$ (check for $JX$ with $X\in TN$ and for $J(\partial/\partial t)$).
Then, since the flow of $U$ preserves $J$ and $\gamma$, if we apply $exp(tU)$ (using its expression in the local coordinates $(x^u,t)$) to (\ref{JF}), we see that (\ref{JF}) holds on $\mathcal{U}_x\times(-\epsilon_x,\epsilon_x)$. Obviously, the integrability of $J$ implies (\ref{normal}) on $\mathcal{U}_x$ just as in the case of $N\times\mathds{R}$.

Therefore, each point $x\in N$ has a neighborhood where the structure of $N$ is normal and we are done.
\end{proof}

Killing vector fields were studied intensively in Riemannian, complex and K\"ahler geometry. For instance, it is known that the trajectories of a unit Killing vector field are geodesic lines (e.g., see \cite{BN} for an extensive study of constant length Killing vector fields) and that on	a compact K\"ahler manifold a Killing vector field must be holomorphic (e.g., \cite{Mor}).

We shall deduce the conditions for the induced structure of a hypersurface to be a classical CRF structure.

The $i$-eigenbundle of $F$ defined by (\ref{strind1}) is $H=W\cap(T^cN)$,
where $W$ is the $i$-eigenbundle of $J$. Therefore, if $J$ is integrable, $H$ is closed under the Lie bracket and the structure $F$ is of the CR type. Accordingly, since CRF requires CR type, we shall restrict our discussion to hypersurfaces of Hermitian manifolds.

We recall the Gauss-Weingarten equations of the hypersurface $N$ with unit normal field $\nu$ \cite{KN}:
\begin{equation}\label{G-W} \nabla^\gamma_XY=\nabla^s_XY+b(X,Y)\nu,\;\nabla^\gamma_X\nu=-W_\nu X+\nabla^\nu_X\nu,\end{equation} where $X,Y\in TN$, $\nabla^\gamma,\nabla^s$ are the Levi-Civita connections of the metrics $\gamma,s$,
$\nabla^\nu$ is the induced connection of the normal bundle of $N$ and $b,W_\nu$ are the second fundamental form and Weingarten operator, respectively. The latter are related by the formula $s(W_\nu X,Y)=b(X,Y)$. In fact, since $\nu$ is of length $1$, $\nabla^\nu_X\nu=0$, but, we wrote it anyway since we look at equations (\ref{G-W}) as defining a connection on the vector bundle $T_NM$ over $N$, which we call the {\it Gauss-Weingarten connection}.

The conditions for the induced structure to be classical CRF and to be normal are
given by
\begin{prop}\label{answer2} 1. The induced structure $(F,Z,\xi,s)$ of the oriented hypersurface $N$ of the Hermitian manifold $(M,\gamma,J)$ is a classical CRF structure iff, $\forall X,Y\in TN$ such that $\Omega(X,\nu)=0,\Omega(Y,\nu)=0$
(equivalently, $X,Y\in P=im\,F$), one has
\begin{equation}\label{eqCRF2} d\Omega(JX,JY,J\nu)=d\Omega(X,Y,J\nu),\;
b(FX,FY)=b(X,Y).\end{equation}
2. The same induced structure is normal iff (\ref{eqCRF2}) holds and \begin{equation}\label{eqnormal2}b(Z,X)=b(X,Z)=-\frac{1}{2}d\Omega(\nu,Z,JX),
\;\;\forall X\in P.\end{equation}
\end{prop}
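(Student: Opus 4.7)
The plan is to expand the classical CRF and normality conditions (as formulated just above the proposition) via the Gauss--Weingarten equations (\ref{G-W}), and to use the integrability of $J$ to rewrite the resulting covariant derivatives as values of $d\Omega$. Since $J$ is integrable, the paragraph preceding the proposition shows that the induced $F$ is automatically of CR type, so by (\ref{CRFcuLie}) the structure is classical CRF iff $F\circ(L_ZF)=0$; equivalently, the $P$-component of $(L_ZF)X=[Z,FX]-F[Z,X]$ vanishes for every $X\in P$ (the case $X\in Q$ is trivial since $(L_ZF)Z=0$). By Remark \ref{obsctnormal}, normality is CRF plus the vanishing of the $Q=\mathrm{span}\{Z\}$-component of $(L_ZF)X$, i.e., $\xi([Z,FX])=0$ (the term $\xi(F[Z,X])$ vanishes automatically since $F[Z,X]\in P$).

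The central computation rewrites
\[
(L_ZF)X=\nabla^s_Z(FX)-\nabla^s_{FX}Z-F\nabla^s_ZX+F\nabla^s_XZ
\]
using (\ref{G-W}) to pass from $\nabla^s$ to the ambient $\nabla^\gamma$, and then the substitutions $JY=FY+\xi(Y)\nu$ and $Z=-J\nu$. The expansion produces $(\nabla^\gamma J)$-terms together with terms involving the second fundamental form $b$. Integrability of $J$ kills all Nijenhuis-type residues, so the identity $d\Omega(U,V,W)=\sum_{\mathrm{cyc}}\gamma((\nabla^\gamma_UJ)V,W)$ together with the constraints on $\nabla^\gamma J$ that follow from $\mathcal{N}_J=0$ rewrite every relevant $(\nabla^\gamma J)$-evaluation as a value of $d\Omega$ on $M$.

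Pairing the resulting $(L_ZF)X$ with a test vector $Y\in P$ via $s$ then yields a combination of the form $\tfrac12[d\Omega(X,Y,J\nu)-d\Omega(JX,JY,J\nu)]+b(X,Y)-b(FX,FY)$, whose vanishing for all $Y\in P$ is precisely the pair (\ref{eqCRF2}) (the $d\Omega$- and $b$-pieces are independent when the ambient data vary). This proves part 1. For part 2, applying $\xi$ to $(L_ZF)X$ kills the $F[Z,X]$ summand, and the same substitutions produce an expression of the form $2b(Z,FX)+d\Omega(\nu,Z,J(FX))$; replacing $X$ by $FX$ turns its vanishing into (\ref{eqnormal2}), which together with CRF gives normality by Remark \ref{obsctnormal}.

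The main obstacle is the Gauss--Weingarten bookkeeping: correctly extending $X,Y$ off $N$ so that the ambient Lie brackets make sense, tracking the tangential and normal components of every $\nabla^\gamma$-term, and matching the arising $(\nabla^\gamma J)$-combinations to the specific $d\Omega$-arguments appearing in (\ref{eqCRF2})-(\ref{eqnormal2}).
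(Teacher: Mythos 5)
Your overall route is the same as the paper's: write $L_ZF(X)=[Z,FX]-F[Z,X]$ in terms of the torsion-free connection $\nabla^s$, pass to $\nabla^\gamma$ through the Gauss--Weingarten equations, and convert every $\gamma(\nabla^\gamma_\cdot J(\cdot),\cdot)$ into values of $d\Omega$ using the integrability of $J$. (The paper uses the precise form of Proposition IX.4.2 of \cite{KN}, namely $2\gamma(\nabla^\gamma_XJ(Y),U)=d\Omega(X,Y,U)-d\Omega(X,JY,JU)$, which is \emph{equivalent} to integrability and isolates a single $\nabla^\gamma J$ term; the cyclic-sum identity you quote holds on any almost Hermitian manifold and does not by itself do this, so you should cite the sharper identity.) Your decomposition of normality into the CRF condition plus the vanishing of the $Q$-component of $L_ZF$ also matches Remark \ref{obsctnormal} and the paper's treatment of part 2.

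There is, however, one genuine flaw: the justification for splitting the single scalar identity
$\tfrac12[d\Omega(X,Y,J\nu)-d\Omega(JX,JY,J\nu)]+b(X,Y)-b(FX,FY)=0$ into the two separate conditions (\ref{eqCRF2}). You argue that ``the $d\Omega$- and $b$-pieces are independent when the ambient data vary,'' but the proposition is an equivalence for a \emph{fixed} hypersurface of a \emph{fixed} Hermitian manifold, so varying the ambient data is not available; a priori the combined expression could vanish by cancellation between the two pieces, and then the ``only if'' direction of part 1 would fail. The correct argument, which is the one the paper uses, is that the $d\Omega$-piece is skew-symmetric in $(X,Y)$ (it is built from a $3$-form) while the $b$-piece is symmetric in $(X,Y)$ (because $b$ is symmetric and $F$ occupies both slots), so the sum vanishes for all $X,Y\in P$ iff each piece vanishes separately. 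Once you replace the parenthetical by this symmetry argument --- and use the Hermitian identity (\ref{identHerm}) where needed to bring the skew part to the exact form of the first condition (\ref{eqCRF2}) --- your proof is complete and coincides with the paper's.
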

\begin{proof}
The CRF condition (\ref{CRFcuLie}) identically holds for $X=Z$, hence, we have to ask
$F(L_ZF)(X)=0$, $\forall X\in P$, equivalently,
\begin{equation}\label{auxanswer2} s(FL_ZF(X),Y)=-s(L_ZF(X),FY)=0,\;\forall X,Y\in P
\end{equation} (the same equality obviously holds for $Y=Z$).

Since $\nabla^s$ has no torsion, we have
\begin{equation}\label{LFcunabla0}
L_ZF(X)=[Z,FX]-F[Z,X]=\nabla^s_ZF(X)-\nabla^s_{FX}Z+
F\nabla^s_XZ.\end{equation}

Using the Gauss-Weingarten connection and the definition (\ref{strind1}) of the induced structure, we get, $\forall X,U\in TN$,
\begin{equation}\label{legLieFJ} \nabla^s_XF(U)=\nabla^\gamma_XJ(U)+\xi(U)W_\nu X-b(X,U)Z-(\nabla^s_X\xi)(U)\nu.\end{equation}

Using (\ref{legLieFJ}) for $X\mapsto Z,U\mapsto X\in P$ and$\xi(X)=0,\xi(Z)=1$, (\ref{LFcunabla0}) yields
$$
 L_ZF(X)=\nabla^\gamma_ZJ(X)-\nabla^\gamma_{JX}Z+ J\nabla^\gamma_XZ+(b(FX,Z)+d\xi(X,Z))\nu.
$$

Accordingly and since $Y\in P$ implies $JY=FY$, condition (\ref{auxanswer2}) becomes
\begin{equation}\label{auxanswer20} \gamma(\nabla^\gamma_ZJ(X),JY)-\gamma(\nabla^\gamma_{JX}Z,JY)
+\gamma(\nabla^\gamma_XZ,Y)=0,\,\forall X,Y\in P.\end{equation}
Then, using $Z=-J\nu$, the Weingarten equation and the relation between $W_\nu$ and $b$, (\ref{auxanswer20}) changes to
\begin{equation}\label{auxanswer21}\begin{array}{l} \gamma(\nabla^\gamma_ZJ(X),JY)+\gamma(\nabla^\gamma_{JX}J(\nu),JY)- \gamma(\nabla^\gamma_XJ(\nu),Y)
\vspace*{2mm}\\ \hspace*{1cm}=b(FX,Y)+b(X,FY).\end{array}\end{equation}

Furthermore, it is known that the integrability of $J$ is equivalent to the following equality (Proposition IX.4.2 of \cite{KN} with our sign convention for $\Omega$ and our convention on the evaluation of the exterior differential)
\begin{equation}\label{eqdinKN}
2\gamma(\nabla^\gamma_XJ(Y),U)=d\Omega(X,Y,U)-d\Omega(X,JY,JU),\,\forall X,Y,U\in TM.\end{equation}
Using this equality, (\ref{auxanswer21}) becomes
\begin{equation}\label{auxanswer22}\begin{array}{l} d\Omega(Z,X,JY)+d\Omega(JX,Y,Z)
+\frac{1}{2}(d\Omega(\nu,X,Y)\vspace*{2mm}\\ -d\Omega(\nu,JX,JY))
=b(FX,Y)+b(X,FY),\;\forall X,Y\in P.\end{array}\end{equation}

In (\ref{auxanswer22}) the left hand side is skew symmetric in $X,Y$ and the right hand side is symmetric, therefore, the condition holds iff the two sides vanish. The vanishing of the right hand side becomes the second condition (\ref{eqCRF2}) by putting $FY$ instead of $Y$. The vanishing of the left hand side becomes the first condition (\ref{eqCRF2}) if we replace $\nu=JZ$, use the identity
\begin{equation}\label{identHerm}
d\Omega(JZ,JX,JY)=d\Omega(JZ,X,Y)+d\Omega(Z,JX,Y)+d\Omega(Z,X,JY),\end{equation}
which holds for any vector fields on a Hermitian manifold, and, in the end, replace $Y$ by $JY$. (Identity (\ref{identHerm}) may be checked on any combination of arguments of the complex type $(1,0),(0,1)$ while recalling that $d\Omega$ has only terms of type $(2,1),(1,2)$.)

For the normality of the induced structure we have to ask $L_ZF=0$, which means that we have to add to (\ref{auxanswer2}) the condition $s(L_ZF(X),Z)=0$ for $X\in P$. In view of (\ref{LFcunabla0}) the result is
$$s(\nabla^s_Z(FX),Z)-s(\nabla^s_{FX}Z,Z)=0,\;\;\forall X\in P.$$
Here, the second term of the left hand side vanishes because $s(Z,Z)=1$. Furthermore, since $X\in P$, $FX=JX$, and using the Gauss equation, the previous condition takes the form $\gamma(\nabla^\gamma_Z(JX),Z)=0$, equivalently,
$$\gamma(\nabla^\gamma_ZJ(X),Z)- \gamma(\nabla^\gamma_ZX,\nu)=\gamma(\nabla^\gamma_ZJ(X),Z) -b(Z,X)=0.$$
Finally, the equality (\ref{eqdinKN}) shows that the required condition is (\ref{eqnormal2}).
\end{proof}
\begin{rem}\label{obsprimacond} {\rm The first condition (\ref{eqCRF2}) refers to the position of the hypersurface in the ambient manifold. But, it is equivalent to the following property of the fundamental form $\Xi$ of the induced structure:
\begin{equation}\label{LXi} L_Z\Xi(FX,FY)=L_Z\Xi(X,Y),\;\forall X,Y\in P. \end{equation}
Indeed, one has
$$\begin{array}{l}d\Omega(X,Y,J\nu)-d\Omega(JX,JY,J\nu)
=i(Z)d\Xi(FX,FY)-i(Z)d\Xi(X,Y)\vspace*{2mm}\\ \hspace*{25mm}=L_Z\Xi(FX,FY)-L_Z\Xi(X,Y),\end{array}$$
because $i(Z)\Xi=0$. Using $i(Z)\Xi=0$ we also see that, in fact, (\ref{LXi}) holds $\forall X,Y\in TN$.

We notice that property (\ref{LXi}) holds on any metric almost contact manifold that is classical CRF. Indeed, the explicit expression of $L_Z\Xi(FX,FY)$ yields
$$L_Z\Xi(FX,FY)=L_Z\Xi(X,Y) +\Xi(F(L_ZF(X),Y)+\Xi(X,FL_ZF(Y))$$ and $F\circ (L_ZF)=0$ in the CRF case.}\end{rem}
\begin{corol}\label{corolK} 1. The induced structure $(F,Z,\xi)$ of the oriented hypersurface $N$ of the K\"ahler manifold $(M,\gamma,J)$ is a classical CRF structure iff\footnote{The necessity of this condition shows that Proposition 3.6 of \cite{VAnn} is not correct. In fact, the error goes back to Proposition 2.5 of \cite{V1}.}
\begin{equation}\label{eqCRF3}	b(FX,FY)=b(X,Y),\;\;\forall X,Y\in P.\end{equation}
2. The same induced structure is normal iff (\ref{eqCRF3}) holds and, $\forall X\in P,$ $b(Z,X)=0$.
In particular, the normality conditions hold for totally geodesic and umbilical hypersurfaces.
\end{corol}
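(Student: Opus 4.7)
The plan is to derive this corollary directly from Proposition \ref{answer2} by exploiting that a Kähler manifold has $d\Omega=0$, and then to read off the totally-geodesic and umbilical cases from the resulting simplified conditions.

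First, for part 1, I would observe that in the Kähler case the fundamental form $\Omega(X,Y)=\gamma(JX,Y)$ is closed. Therefore the first condition in (\ref{eqCRF2}), namely $d\Omega(JX,JY,J\nu)=d\Omega(X,Y,J\nu)$, holds trivially (both sides vanish), while the second condition reduces to the stated equation $b(FX,FY)=b(X,Y)$ for $X,Y\in P$. This is exactly (\ref{eqCRF3}). No further computation is needed — the classical CRF criterion for hypersurfaces of Hermitian manifolds just specializes to the Weingarten-form condition.

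For part 2, normality adds (\ref{eqnormal2}), and again $d\Omega=0$ forces the right-hand side of (\ref{eqnormal2}) to vanish, leaving $b(Z,X)=b(X,Z)=0$ for $X\in P$; the symmetry of $b$ makes one of these redundant. Combined with (\ref{eqCRF3}), this yields the stated normality criterion.

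The verification for the special hypersurface classes is then immediate. If $N$ is totally geodesic, $b\equiv 0$ and both (\ref{eqCRF3}) and $b(Z,X)=0$ hold automatically. If $N$ is umbilical, $b(X,Y)=\lambda\, s(X,Y)$ for some function $\lambda$ on $N$; then, using the compatibility relation (\ref{clasmetric}), one has
\[
b(FX,FY)=\lambda\bigl(s(X,Y)-\xi(X)\xi(Y)\bigr)=b(X,Y)-\lambda\,\xi(X)\xi(Y),
\]
which coincides with $b(X,Y)$ whenever $X,Y\in P$ since $\xi$ vanishes on $P$. Moreover $b(Z,X)=\lambda\, s(Z,X)=0$ because $Z\perp_s P$. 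Hence both the CRF and the normality conditions are satisfied.

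There is no real obstacle here: the whole statement is a specialization of Proposition \ref{answer2} using $d\Omega=0$, and the examples follow from elementary properties of the second fundamental form and the compatibility of $s$ with $F$.
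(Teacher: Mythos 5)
Your proposal is correct and follows essentially the same route as the paper: specialize Proposition \ref{answer2} using $d\Omega=0$, so that the first condition (\ref{eqCRF2}) is automatic and (\ref{eqnormal2}) reduces to $b(Z,X)=0$, and then check the totally geodesic and umbilical cases via $b=0$ and $b=\lambda s$ together with the compatibility (\ref{clasmetric}). Your treatment of the umbilical case merely spells out the computation that the paper leaves implicit.
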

\begin{proof} In the K\"ahler case one has $d\Omega=0$, hence, the first condition (\ref{eqCRF2}) holds and (\ref{eqnormal2}) reduces to $b(Z,X)=0$. Totally geodesic means $b=0$. Umbilical means $b=\lambda s$ and the conditions hold because $s$ and $F$ are compatible. The second part of the corollary is well known \cite{{T},{Ok}}.\end{proof}
\begin{corol}\label{corolXi} If the induced structure of a hypersurface $N$ of a Hermitian manifold $M$ has a closed fundamental form $\Xi$, the structure is CRF iff (\ref{eqCRF3}) holds. In particular if $N$ is either totally geodesic or totally umbilical and $d\Xi=0$, the induced structure of $N$ is classical CRF.\end{corol}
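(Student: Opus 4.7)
The plan is to invoke Proposition \ref{answer2} and reduce the two conditions in (\ref{eqCRF2}) to the single condition (\ref{eqCRF3}) under the hypothesis $d\Xi=0$. The key observation comes from Remark \ref{obsprimacond}: the first condition of (\ref{eqCRF2}) is equivalent to the property $L_Z\Xi(FX,FY)=L_Z\Xi(X,Y)$ for all $X,Y\in P$. So the real task is to show that $d\Xi=0$ implies $L_Z\Xi=0$ outright.

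First I would apply Cartan's magic formula
\[
L_Z\Xi=i(Z)\,d\Xi+d(i(Z)\Xi).
\]
Under the hypothesis, the first term vanishes. For the second term, I would recall that the induced metric almost contact structure satisfies $\Xi(Z,\cdot)=s(FZ,\cdot)=0$, i.e., $i(Z)\Xi=0$. Hence $L_Z\Xi=0$, which in particular forces $L_Z\Xi(FX,FY)=L_Z\Xi(X,Y)$ trivially, and by Remark \ref{obsprimacond} the first condition of (\ref{eqCRF2}) holds automatically. Proposition \ref{answer2} then yields the claimed equivalence: CRF $\Leftrightarrow$ (\ref{eqCRF3}).

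For the second assertion, I would check (\ref{eqCRF3}) directly in the two special cases. If $N$ is totally geodesic, $b\equiv 0$, so $b(FX,FY)=0=b(X,Y)$ with no work. If $N$ is totally umbilical, $b=\lambda s$ for some function $\lambda$; then, using the metric compatibility $s(FX,FY)=s(X,Y)-\xi(X)\xi(Y)$ from (\ref{clasmetric}) together with $\xi(X)=\xi(Y)=0$ for $X,Y\in P$ (since $\xi=\flat_s Z$ and $P\perp_s Z$), we get $b(FX,FY)=\lambda s(X,Y)=b(X,Y)$.

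I do not expect any real obstacle here: the whole argument is simply the recognition that $i(Z)\Xi=0$ combined with $d\Xi=0$ kills $L_Z\Xi$ via Cartan's formula, after which the first part of the corollary is a direct appeal to Proposition \ref{answer2} and Remark \ref{obsprimacond}, and the second part is a one-line verification in each of the two geometric cases.
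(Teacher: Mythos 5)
Your proposal is correct and follows essentially the same route as the paper: the paper's one-line proof ``Use condition (\ref{LXi})'' relies on Remark \ref{obsprimacond}, whose computation already encodes the Cartan identity $i(Z)d\Xi=L_Z\Xi$ (valid since $i(Z)\Xi=0$), so that $d\Xi=0$ kills the first condition of (\ref{eqCRF2}) and reduces the CRF criterion of Proposition \ref{answer2} to (\ref{eqCRF3}). Your verification of (\ref{eqCRF3}) in the totally geodesic and umbilical cases likewise matches the argument the paper gives for Corollary \ref{corolK}.
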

\begin{proof} Use condition (\ref{LXi}).\end{proof}

Now, we shall consider the generalized framework.
If $\iota:N\subseteq M$ is an oriented hypersurface of a generalized almost Hermitian manifold$(M,G,\mathcal{J})\Leftrightarrow(M,\gamma,\psi,J_\pm)$ (see Section 2), N has an induced generalized Riemannian metric $G'$ defined by the pair $(s=\iota^*\gamma,\kappa=\iota^*\psi)$ and induced, metric, almost contact structures $(F_\pm,Z_\pm,\xi_\pm,s)$. In particular, the quadruple $(s,\kappa,F_\pm)$ defines an induced generalized $G'$-metric F structure $\mathcal{F}$ of $N$. We will discuss the conditions for this induced structure to be CRFK in the case where $M$ is a generalized K\"ahler manifold, i.e., $J_\pm$ are integrable and (\ref{relpsiJ}) holds. We shall also use the fact that, in view of
(\ref{eqdinKN}), condition (\ref{relpsiJ}) is equivalent to \cite{Gualt}
\begin{equation}\label{relpsiOmega} d\Omega_\pm(J_\pm X,J_\pm Y,J_\pm U)=\mp d\psi(X,Y,U).
\end{equation}

Firstly, the CRFK condition (1), Section 2, shows that the two induced structures $F_\pm$ have to be classical CRF structures, i.e., (see (\ref{eqCRF2})), if $X,Y\in P_\pm$, then,
\begin{equation}\label{FpmclasCRF} d\Omega_\pm(J_\pm X,J\pm Y,J_\pm\nu)=d\Omega_\pm(X,Y,J_\pm\nu),\;
b(F_\pm X,F_\pm Y)=b(X,Y).\end{equation}
If $M$ is generalized K\"ahler, then, in view of (\ref{relpsiOmega}), (\ref{FpmclasCRF}) becomes
\begin{equation}\label{FpmKclasCRF}\begin{array}{l} \iota^*(i(\nu)d\psi)(F_\pm X,F_\pm Y)=\iota^*(i(\nu)d\psi)(X,Y),\vspace*{2mm}\\
b(F_\pm X,F_\pm Y)=b(X,Y),\end{array}\end{equation} where $X,Y\in P_\pm$.

Now, we shall prove the following:
\begin{prop}\label{answer3} If $\iota:N\subseteq M$ is an oriented hypersurface of a generalized K\"ahler manifold, the induced generalized F structure of $N$ is CRFK iff one has
\begin{equation}\label{eqptans3} \begin{array}{l}
 \iota^*(i(\nu)d\psi)(F_\pm X,F_\pm Y)=\iota^*(i(\nu)d\psi)(X,Y),\;\forall X,Y\in P_\pm,\vspace*{2mm}\\
b(X,F_\pm U)=\mp\frac{1}{2}\iota^*(i(\nu)d\psi)(X,F_\pm U),\;\forall X\in TN,U\in P_\pm.
\end{array}\end{equation}\end{prop}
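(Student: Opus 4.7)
The plan is to invoke the characterization of generalized CRFK structures recalled in Section 2: the induced structure on $N$ is CRFK iff (a) both $F_\pm$ are classical CRF on $N$, and (b) the analogue of (\ref{CRFK6}) for the induced data $(s,\kappa,F_\pm,\nabla^s)$ holds on $N$. For (a), apply Proposition \ref{answer2}: the K\"ahler hypothesis together with (\ref{relpsiOmega}) converts (\ref{eqCRF2}) into (\ref{FpmKclasCRF}), whose first line coincides with the first line of (\ref{eqptans3}). The second line of (\ref{FpmKclasCRF}), $b(F_\pm X,F_\pm Y)=b(X,Y)$ for $X,Y\in P_\pm$, will follow \emph{a posteriori} from the second line of (\ref{eqptans3}): substituting $U=-F_\pm Y$ (so $F_\pm U=Y$, using $F_\pm^2|_{P_\pm}=-Id$) yields $b(X,Y)=\mp\frac{1}{2}\iota^*(i(\nu)d\psi)(X,Y)$ for $X,Y\in P_\pm$; then replacing $X,Y$ by $F_\pm X,F_\pm Y$ and invoking the first line of (\ref{eqptans3}) gives the desired equality.

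For (b), the $N$-version of (\ref{CRFK6}), rewritten via the $g$-skewness of $F_\pm$, reads
$$s(\nabla^s_XF_\pm(Y),F_\pm U)=\mp\frac{1}{2}\left[d\kappa(X,Y,F_\pm^2 U)+d\kappa(X,F_\pm Y,F_\pm U)\right]$$
for all $X,Y,U\in TN$. Substitute the tangential part of the Gauss--Weingarten formula (\ref{legLieFJ}) applied to $J_\pm$, namely $\nabla^s_XF_\pm(Y)=[\nabla^\gamma_XJ_\pm(Y)]^T+\xi_\pm(Y)W_\nu X-b(X,Y)Z_\pm$. Using $s(Z_\pm,F_\pm U)=0$, $s(W_\nu X,F_\pm U)=b(X,F_\pm U)$, and $F_\pm U=J_\pm U-\xi_\pm(U)\nu$, the left-hand side becomes
$$\gamma(\nabla^\gamma_XJ_\pm(Y),J_\pm U)-\xi_\pm(U)\gamma(\nabla^\gamma_XJ_\pm(Y),\nu)+\xi_\pm(Y)b(X,F_\pm U).$$
Apply the ambient generalized K\"ahler identity (\ref{relpsiJ}) to the two $\gamma$-terms, and expand the right-hand side using $d\kappa=\iota^*d\psi$, $F_\pm V=J_\pm V-\xi_\pm(V)\nu$, $F_\pm^2 U=-U+\xi_\pm(U)Z_\pm$, and $J_\pm\nu=-Z_\pm$.

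Split according to whether $Y, U$ lie in $P_\pm$ or in $span\{Z_\pm\}$. If $U\in span\{Z_\pm\}$ both sides vanish trivially. If $Y, U\in P_\pm$ all $\xi_\pm$-factors drop and the remaining $d\psi$-terms match identically by (\ref{relpsiJ}) on $M$. The only nontrivial case, $Y=Z_\pm$ and $U\in P_\pm$, collapses after the cancellations to $b(X,F_\pm U)=\pm\frac{1}{2}d\psi(X,\nu,F_\pm U)$; since $d\psi(X,\nu,F_\pm U)=-d\psi(\nu,X,F_\pm U)$ and $F_\pm:P_\pm\to P_\pm$ is a bijection, this is precisely the second line of (\ref{eqptans3}). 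The converse direction retraces the same identities backward. The main obstacle will be the bookkeeping of the five $d\psi$-terms generated by expanding $d\kappa(X,F_\pm Y,F_\pm U)$ and $d\kappa(X,Y,F_\pm^2 U)$ against the three $d\psi$-terms obtained by applying (\ref{relpsiJ}) to the Gauss--Weingarten rewrite, verifying that the cancellations occur with the correct $\pm/\mp$ signs throughout.
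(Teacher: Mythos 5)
Your proposal is correct and follows essentially the same route as the paper: it uses the recalled characterization of CRFK structures via conditions (1) and (2), converts the classical CRF condition through (\ref{relpsiOmega}), substitutes the Gauss--Weingarten identity (\ref{legLieFJ}) into the induced version of (\ref{CRFK6}), applies (\ref{relpsiJ}), and reduces to the single nontrivial case $Y=Z_\pm$, $U\in P_\pm$ — including the same a posteriori derivation of $b(F_\pm X,F_\pm Y)=b(X,Y)$ from the two lines of (\ref{eqptans3}). The only cosmetic difference is that the paper collapses the case analysis into the single identity $\xi_\pm(Y)b(X,F_\pm U)=\pm\frac{1}{2}\xi_\pm(Y)d\psi(X,\nu,J_\pm U)$ before specializing $Y$, whereas you split by cases from the start; the sign bookkeeping you flag does close up as claimed.
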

\begin{proof} If we assume that the induced structure is generalized CRFK, the first condition (\ref{eqptans3}) comes from (\ref{FpmKclasCRF}).

Then, let us look at the CRFK condition (2), Section 2. Using the general formula (\ref{legLieFJ}), we get
\begin{equation}\label{auxptgenK}\begin{array}{l} s(\nabla^s_XF_\pm(Y),F_\pm U)
=\gamma(\nabla^\gamma_XJ_\pm(Y),J_\pm U)\vspace*{2mm}\\ -\xi_\pm(U)\gamma(\nabla^\gamma_XJ_\pm(Y),\nu) +\xi_\pm(Y)b(X,F_\pm U),\;\forall X,Y,U\in TN.\end{array}\end{equation}
If we calculate the right hand side of (\ref{auxptgenK}) using the generalized K\"ahler condition (\ref{relpsiJ}) and require the result to be equal to the sign opposite of the right hand side of (\ref{CRFK6}) with $s,\kappa$ instead of $\gamma,\psi$, then, after reductions, the CRFK condition (\ref{CRFK6}) of the induced structure of the hypersurface turns out to be
$$\xi_\pm(Y)b(X,F_\pm U)=\pm\frac{1}{2}\xi_\pm(Y)d\psi(X,\nu,J_\pm U),\;\forall X,Y,U\in TN.$$
This condition identically holds for $Y\in P_\pm$ ($\xi_\pm(Y)=0$), hence, it suffices to ask it for $Y=Z_\pm$ ($\xi_\pm(Z_\pm)=1$). In this case, the result is an identity for $U=Z_\pm$ and it is the second condition (\ref{eqptans3}) for $U\in P_\pm$.

Therefore, if the induced structure is CRFK, the two conditions (\ref{eqptans3}) hold.

To prove the converse, since (\ref{eqptans3}) implies the first condition (\ref{FpmKclasCRF}) and (\ref{CRFK6}) (as shown by the calculation above), we only have to prove that (\ref{eqptans3}) also implies the second condition (\ref{FpmKclasCRF}). This follows by using $X\in P_\pm$ in the second equality (\ref{eqptans3}). (Of course, since $b$ is symmetric, $b(F_\pm X,U)$ is equal to the value given by (\ref{eqptans3}) for $b(U,F_\pm X)$.)
\end{proof}
\begin{corol}\label{Fpmnecnormal} If the induced structure of the hypersurface $N$ of a generalized K\"ahler manifold is CRFK, then, the almost contact structures $(F_\pm,Z_\pm,\xi_\pm)$ are normal.
\end{corol}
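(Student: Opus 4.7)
The plan is to apply Proposition~\ref{answer2} separately to each component almost contact structure $(F_\pm,Z_\pm,\xi_\pm)$. Since the generalized K\"ahler hypothesis guarantees that $(\gamma,J_\pm)$ are genuine Hermitian structures on $M$, that proposition characterizes normality of $(F_\pm,Z_\pm,\xi_\pm)$ by the conjunction of (\ref{eqCRF2}) and (\ref{eqnormal2}) written with $J_\pm,\Omega_\pm$ in place of $J,\Omega$. By Proposition~\ref{answer3}, the CRFK hypothesis supplies the two identities (\ref{eqptans3}), so the task reduces to extracting the three required equalities from those.

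For the classical CRF part (\ref{eqCRF2}): the second equation ($b(F_\pm X,F_\pm Y)=b(X,Y)$ on $P_\pm$) is exactly the second line of (\ref{FpmKclasCRF}), already derived from (\ref{eqptans3}) inside the proof of Proposition~\ref{answer3}. The first equation becomes, after applying the generalized K\"ahler identity (\ref{relpsiOmega}) with $C=\nu$ to both sides, the statement $\iota^*(i(\nu)d\psi)(X,Y)=\iota^*(i(\nu)d\psi)(F_\pm X,F_\pm Y)$, which is the first line of (\ref{eqptans3}). So the CRF conditions are built into the CRFK data.

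The new content is (\ref{eqnormal2}), namely $b(Z_\pm,X)=-\tfrac12 d\Omega_\pm(\nu,Z_\pm,J_\pm X)$ for $X\in P_\pm$. I would specialize the second line of (\ref{eqptans3}) by taking $X:=Z_\pm$ and letting $U$ range over $P_\pm$, so that $X':=F_\pm U$ sweeps $P_\pm$ as $F_\pm$ is an automorphism there. This gives
\[
 b(Z_\pm,X') \;=\; \mp\tfrac12\,\iota^*(i(\nu)d\psi)(Z_\pm,X'), \qquad X'\in P_\pm.
\]
Using $\nu=J_\pm Z_\pm$ and $Z_\pm=J_\pm(-\nu)$, I would apply (\ref{relpsiOmega}) to rewrite $d\Omega_\pm(\nu,Z_\pm,J_\pm X')=d\Omega_\pm(J_\pm Z_\pm,J_\pm(-\nu),J_\pm X')$ as a multiple of $d\psi(Z_\pm,-\nu,X')$, equivalently of $(i(\nu)d\psi)(Z_\pm,X')$. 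Matching constants turns the displayed equation into $b(Z_\pm,X')=-\tfrac12 d\Omega_\pm(\nu,Z_\pm,J_\pm X')$, which is exactly (\ref{eqnormal2}) for $F_\pm$. An invocation of Proposition~\ref{answer2} then delivers normality of both $(F_\pm,Z_\pm,\xi_\pm)$.

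The main hurdle is routine but demands care: the two $\mp$ signs (one coming from (\ref{eqptans3}) and one from (\ref{relpsiOmega})) must combine, together with the sign picked up when pulling $-\nu$ out of the $d\psi$ slot and when swapping its first two arguments, to reproduce the $-\tfrac12$ in (\ref{eqnormal2}). Beyond that sign audit, no further geometric input is required, since the structural reduction has already taken place through Propositions~\ref{answer2} and~\ref{answer3}.
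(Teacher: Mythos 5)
Your argument coincides with the paper's own proof: the paper likewise obtains (\ref{eqnormal2}) by evaluating the second identity of (\ref{eqptans3}) on $b(Z_\pm,X)=-b(Z_\pm,F_\pm(F_\pm X))$ for $X\in P_\pm$ and then converting $\iota^*(i(\nu)d\psi)$ into $d\Omega_\pm$ via (\ref{relpsiOmega}), which is exactly your substitution $X:=Z_\pm$, $U\in P_\pm$ with $F_\pm U$ sweeping $P_\pm$. The only difference is that you spell out why the CRF part of the normality criterion of Proposition \ref{answer2} is also satisfied, a point the paper leaves implicit because Proposition \ref{answer3} already records that the CRFK hypothesis forces $F_\pm$ to be classical CRF.
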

\begin{proof} Compute $b(Z_\pm,X)=-b(Z_\pm,F_\pm(F_\pm X))$ for $X\in P_\pm$ using (\ref{eqptans3}) and (\ref{relpsiOmega}). The result is (\ref{eqnormal2}).
\end{proof}
\begin{corol}\label{corolCRFK} Assume that the oriented hypersurface $N$ of the generalized K\"ahler manifold $M$ satisfies the condition $\iota^*(i(\nu)d\psi)=0$ (in particular, that $d\psi=0$). Then: (a) if $N$ is a totally geodesic submanifold, the induced structure of $N$ is CRFK, (b) if the induced structure is CRFK and either $\nabla^\gamma_{Z_+}Z_+=0$ or $\nabla^\gamma_{Z_-}Z_-=0$, the hypersurface $N$ is totally geodesic.\end{corol}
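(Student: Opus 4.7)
The plan is to feed the hypothesis $\iota^*(i(\nu)d\psi)=0$ into the characterization (\ref{eqptans3}) from Proposition~\ref{answer3}. Under this hypothesis the first line of (\ref{eqptans3}) becomes $0=0$ and the second collapses to
\begin{equation*}
b(X,F_\pm U)=0,\quad X\in TN,\;U\in P_\pm.
\end{equation*}
So under the running assumption, CRFK of the induced structure is equivalent to the vanishing of $b$ on $TN\times P_\pm$. Both parts of the corollary then amount to comparing this with full total geodesy $b\equiv 0$.

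For part (a), total geodesy gives $b=0$, which trivially satisfies both lines of (\ref{eqptans3}), so Proposition~\ref{answer3} yields the CRFK property directly.

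For part (b), starting from the CRFK assumption and the reduced condition above, I first observe that $F_\pm$ restricts to an automorphism of $P_\pm$ (since $F_\pm^2=-Id$ on $P_\pm=\mathrm{im}\,F_\pm$), so $\{F_\pm U:U\in P_\pm\}=P_\pm$ and hence $b(X,Y)=0$ whenever $X\in TN$ and $Y\in P_\pm$. By symmetry of $b$, this means $b$ vanishes as soon as one argument lies in $P_\pm$, and the only remaining unknown is the single value $b(Z_\pm,Z_\pm)$. The extra hypothesis $\nabla^\gamma_{Z_\pm}Z_\pm=0$ enters exactly here: the Gauss equation (\ref{G-W}) gives
\begin{equation*}
\nabla^\gamma_{Z_\pm}Z_\pm=\nabla^s_{Z_\pm}Z_\pm+b(Z_\pm,Z_\pm)\nu,
\end{equation*}
so the vanishing of the normal component forces $b(Z_\pm,Z_\pm)=0$. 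Combined with the previous step, $b\equiv 0$ and $N$ is totally geodesic.

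There is no substantial obstacle; the argument is essentially bookkeeping once (\ref{eqptans3}) is rewritten under the hypothesis. The only delicate point is observing that vanishing of $b(\cdot,F_\pm U)$ over $U\in P_\pm$ already controls $b$ on all of $TN\times P_\pm$, which uses nothing more than the fact that $F_\pm|_{P_\pm}$ is a bijection. This also explains why only one sign $(+$ or $-)$ of the hypothesis on $\nabla^\gamma_{Z_\pm}Z_\pm$ is needed: the argument is symmetric in the choice of $\pm$, and either one suffices to eliminate the last remaining component of $b$.
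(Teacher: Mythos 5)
Your argument is correct and follows the same route as the paper: part (a) is read off directly from Proposition \ref{answer3} once $\iota^*(i(\nu)d\psi)=0$ is imposed, and part (b) uses the surjectivity of $F_\pm|_{P_\pm}$ together with the symmetry of $b$ to reduce everything to $b(Z_\pm,Z_\pm)$, which the geodesic hypothesis kills via the Gauss equation. The paper's own proof is just a more compressed version of exactly this bookkeeping.
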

\begin{proof} Conclusion (a) is an obvious consequence of Proposition \ref{answer3}. For (b), firstly, the second condition (\ref{eqptans3}) yields the vanishing of the second fundamental form $b$ on all arguments, except for $b(Z_\pm,Z_\pm)$. Secondly, each of the additional hypotheses means that the trajectories of the corresponding vector field $Z_\pm$ are geodesics of $\gamma$, whence $b(Z_\pm,Z_\pm)$ vanishes as well.
\end{proof}
\section{$(2,1)$-generalized almost contact structures}
In this section we begin by noticing some properties of the induced generalized F structure of a hypersurface of an arbitrary generalized almost Hermitian manifold. Then, we use these properties to define new structures that naturally extend the classical almost contact and metric almost contact structures to the generalized framework.

From Section 2, we recall that, if $\iota:N\rightarrow M$ is an oriented hypersurface of the generalized almost Hermitian manifold $(M,\gamma,\psi,J_\pm)$ and $(F_\pm,Z_\pm,\xi_\pm,s=\iota^*\gamma,\kappa=\iota^*\psi)$ are the induced structures of $N$, then $\mathcal{F}\Leftrightarrow(F_\pm),\,G'\Leftrightarrow(s,\kappa)$ define a generalized metric F structure. Furthermore, we may define the cross sections
$$\mathcal{Z}_\pm=(Z_\pm,\flat_{\kappa\pm s}Z_\pm)\in V'_\pm\subseteq\Gamma\mathbf{T}N,$$
where $V'_\pm$ are the bundles (\ref{exprEpm}) defined by the metric $G'$.

For this induced structure we can prove
\begin{prop}\label{newprop} The structure $(\mathcal{F},G')$ and the cross sections	$\mathcal{Z}_\pm$ satisfy the following properties
\begin{equation}\label{almoctZpm}
g(\mathcal{Z}_+,\mathcal{Z}_-)=0,\,g(\mathcal{Z}_+,\mathcal{Z}_+)=1,\, g(\mathcal{Z}_-,\mathcal{Z}_-)=-1,\end{equation}
\begin{equation}\label{almctF2} \mathcal{F}\mathcal{Z}_\pm=0,\;\mathcal{F}^2=-Id+(\flat_g\mathcal{Z}_+)
\otimes\mathcal{Z}_+ - (\flat_g\mathcal{Z}_-)\otimes\mathcal{Z}_-,\end{equation}
\begin{equation}\label{21metriccuZpm} G'(\mathcal{F}\mathcal{X},\mathcal{F}\mathcal{Y})
=G'(\mathcal{X},\mathcal{Y})-g(\mathcal{Z}_+,\mathcal{X})g(\mathcal{Z}_+,\mathcal{Y})
+g(\mathcal{Z}_-,\mathcal{X})g(\mathcal{Z}_-,\mathcal{Y}),\end{equation}
where $g$ is the pairing metric on $\mathbf{T}N$.\end{prop}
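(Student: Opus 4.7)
The proof rests on the $G'$-orthogonal decomposition $\mathbf{T}N = V'_+ \oplus V'_-$, the explicit parametrization $V'_\pm=\{(X,\flat_{\kappa\pm s}X)\,/\,X\in TN\}$ from (\ref{exprEpm}), the transfer isomorphisms $\tau_\pm:V'_\pm\to TN$, and the action $\mathcal{F}(X,\flat_{\kappa\pm s}X)=(F_\pm X,\flat_{\kappa\pm s}F_\pm X)$ given by the quadruple $(s,\kappa,F_\pm)$. Since each $\mathcal{Z}_\pm$ lies in $V'_\pm$ by construction, every identity reduces to a classical-style computation on each summand separately, drawing on the classical metric almost contact relations (\ref{almcont}) and (\ref{clasmetric}) satisfied by the induced data $(F_\pm,Z_\pm,\xi_\pm,s)$.

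The identities (\ref{almoctZpm}) follow by direct evaluation of the pairing: $g(\mathcal{Z}_\pm,\mathcal{Z}_\pm)=(\kappa\pm s)(Z_\pm,Z_\pm)=\pm 1$ by antisymmetry of $\kappa$ and $\|Z_\pm\|_s=1$, while $g(\mathcal{Z}_+,\mathcal{Z}_-)$ either vanishes by the built-in $g$-orthogonality $V'_+\perp_g V'_-$ or by a direct expansion in which the $\kappa$ and $s$ contributions cancel separately. For (\ref{almctF2}), the equality $\mathcal{F}\mathcal{Z}_\pm=0$ is immediate from $F_\pm Z_\pm=0$. For the formula for $\mathcal{F}^2$, I would decompose an arbitrary section as $\mathcal{X}=\mathcal{X}_++\mathcal{X}_-$ with $\mathcal{X}_\pm=(X_\pm,\flat_{\kappa\pm s}X_\pm)$ and apply the classical identity $F_\pm^2=-Id+\xi_\pm\otimes Z_\pm$, which yields $\mathcal{F}^2\mathcal{X}_\pm=-\mathcal{X}_\pm+\xi_\pm(X_\pm)\mathcal{Z}_\pm$. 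The bridge to the stated form is the key identification $g(\mathcal{Z}_\pm,\mathcal{X})=\pm\xi_\pm(X_\pm)$, obtained by evaluating the pairing on the $V'_\pm$-components (cross terms vanishing by $V'_+\perp_g V'_-$); substituting into the previous expression yields the formula in (\ref{almctF2}).

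For (\ref{21metriccuZpm}), the slickest route is to invoke the skew-symmetry (\ref{G-F}) in the form $G'(\mathcal{F}\mathcal{X},\mathcal{F}\mathcal{Y})=-G'(\mathcal{X},\mathcal{F}^2\mathcal{Y})$, substitute the expression for $\mathcal{F}^2$ from (\ref{almctF2}), and evaluate each pairing $G'(\mathcal{X},\mathcal{Z}_\pm)$ using $G'=\pm g$ on $V'_\pm$. Equivalently, one may transfer everything through $\tau_\pm$ on the $G'$-orthogonal summands and apply the classical metric relation $s(F_\pm X,F_\pm Y)=s(X,Y)-\xi_\pm(X)\xi_\pm(Y)$. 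The only real bookkeeping hurdle throughout is the opposite sign of $g$ on $V'_-$, but the squaring-out inherent in the products $g(\mathcal{Z}_\pm,\mathcal{X})g(\mathcal{Z}_\pm,\mathcal{Y})$ absorbs the sign cleanly once the identity $g(\mathcal{Z}_\pm,\mathcal{X})=\pm\xi_\pm(X_\pm)$ is in hand, so no substantive obstacle remains.
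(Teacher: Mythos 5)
Your proof is correct and follows essentially the same route as the paper's: (\ref{almoctZpm}) from $V'_+\perp_g V'_-$, $g|_{V'_\pm}=\pm s$ and $\|Z_\pm\|_s=1$; (\ref{almctF2}) from the classical identity $F_\pm^2=-Id+\xi_\pm\otimes Z_\pm$ combined with $g(\mathcal{Z}_\pm,\mathcal{X})=\pm\xi_\pm(X)$ for $\mathcal{X}\in V'_\pm$; and (\ref{21metriccuZpm}) from the classical relation (\ref{clasmetric}) on each summand. One caveat about your closing claim that the squaring ``absorbs the sign cleanly'': carrying the $V'_-$ computation to the end gives $G'(\mathcal{F}\mathcal{X},\mathcal{F}\mathcal{Y})=G'(\mathcal{X},\mathcal{Y})-\xi_-(X)\xi_-(Y)$ while $g(\mathcal{Z}_-,\mathcal{X})g(\mathcal{Z}_-,\mathcal{Y})=+\xi_-(X)\xi_-(Y)$, so the identity your argument actually establishes carries a \emph{minus} sign on the $\mathcal{Z}_-$ term --- which is the version consistent with $G'(\mathcal{Z}_-,\mathcal{Z}_-)=1>0$ and with the paper's later classical reformulation $\gamma(F_-X,F_-Y)=\gamma(X,Y)-\sum_a\xi_a(X)\xi_a(Y)$, but not with the ``$+$'' printed in (\ref{21metriccuZpm}); your sketch reproduces this sign slip rather than detecting it, so make the last bookkeeping step explicit.
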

\begin{proof}
The equalities (\ref{almoctZpm}) follow from $V'_+\perp_g V'_-$, $g|_{V'_\pm}=\pm s$ and $||Z_\pm||_s=1$. The equalities (\ref{almctF2}) follow	 by using (\ref{eqJrond}) for $\mathcal{F}$ and $F_\pm$.
Particularly, on $\mathcal{X}\in V'_\pm$, the second equality (\ref{almctF2}) follows from
$$\mathcal{F}^2(X,\flat_{\kappa\pm s}X)\stackrel{(\ref{almcont})}{=}-(X,\flat_{\kappa\pm s}X) +\xi_\pm(X)(Z_\pm,\flat_{\kappa\pm s}Z_\pm)$$ and
$$\begin{array}{lcl}\flat_g\mathcal{Z}_\pm((X,\flat_{\kappa\pm s}X))&=&g((Z_\pm,\flat_{\kappa\pm s}Z_\pm),(X,\flat_{\kappa\pm s}X))\vspace*{2mm}\\ &=&\pm s(Z_\pm,X)=\pm\xi_\pm(X).\end{array}$$
Finally, (\ref{21metriccuZpm}) follows from the classical metric property (\ref{clasmetric}).
\end{proof}

From \cite{V1}, we recall that a set of $g$-orthogonal cross sections $\mathcal{Z}_a,\mathcal{Z}_\alpha\in\Gamma\mathbf{T}M$ is a {\it complementary frame} of the generalized F structure $\mathcal{F}$ if $g(\mathcal{Z}_a,\mathcal{Z}_a)=-1, g(\mathcal{Z}_\alpha,\mathcal{Z}_\alpha)=1$, $\forall a=1,...,neg\,\mathcal{F},\alpha=1,...,corank\,\mathcal{F}-neg\,\mathcal{F}$ and
\begin{equation}\label{comfr} \mathcal{F}\mathcal{Z}_a=0,\,\mathcal{F}\mathcal{Z}_\alpha=0,\,
\mathcal{F}^2=-Id+
\sum_\alpha(\flat_g\mathcal{Z}_\alpha)\otimes\mathcal{Z}_\alpha- \sum_a(\flat_g\mathcal{Z}_a)\otimes\mathcal{Z}_a.\end{equation}
Any generalized F structure has local complementary frames and, if such frames exist globally we say that the structure {\it has complementary frames}. In fact, a generalized F structure with complementary frames may be defined directly as a system $(\mathcal{F},\mathcal{Z}_a,\mathcal{Z}_\alpha)$, where $\mathcal{F}$ is a $g$-skew-symmetric endomorphism of $\mathbf{T}M$ and all the above conditions are satisfied. Indeed, then, (\ref{comfr}) implies $\mathcal{F}^3+\mathcal{F}=0$. It also implies that $S=ker\,\mathcal{F}=span\{\mathcal{Z}_a,\mathcal{Z}_\alpha\}$ and $\mathbf{T}M=L\oplus S$ ($L=im\,\mathcal{F}$). The last condition (\ref{comfr}) yields
\begin{equation}\label{prScuframe} pr_S\mathcal{X}=\mathcal{X}+\mathcal{F}^2\mathcal{X} =\sum_\alpha g(\mathcal{Z}_\alpha,\mathcal{X})\mathcal{Z}_\alpha- g(\mathcal{Z}_a,\mathcal{X})\mathcal{Z}_a, \;\forall\mathcal{X}\in\mathbf{T}M.\end{equation}

Thus, (\ref{almoctZpm}) and (\ref{almctF2}) show that the induced generalized F structure of a hypersurface of a generalized almost Hermitian manifold has complementary frames, corank $2$ and negative index $1$.

The interesting point is that the same holds for a classical almost contact structure $(F,Z,\xi)$ of an arbitrary manifold $M$. Indeed, then, the equivalent generalized F structure is $\mathcal{F}(X,\alpha)=(FX,-\alpha\circ F)$ and, if we define $\mathcal{Z}_+=(Z,\xi),\mathcal{Z}_-=(Z,-\xi)$, conditions (\ref{almoctZpm}) and (\ref{almctF2}) are satisfied. We shall refer to this interpretation of $(F,Z,\xi)$ by saying that ``we see $(F,Z,\xi)$ as a $(2,1)$-generalized structure".

In view of the last remark it is natural to give the following definition.
\begin{defin}\label{21genct} {\rm  On any manifold $M$, a {\it $(2,1)$-generalized almost contact structure} is a generalized F structure $\mathcal{F}$ of corank $2$ and negative index $1$, together with a fixed complementary frame $\mathcal{Z}_-,\mathcal{Z}_+$. Furthermore, if we add a generalized Riemannian metric $G\Leftrightarrow(\gamma,\psi)$ such that (\ref{21metriccuZpm}), with $s\mapsto\gamma,\kappa\mapsto\psi$, also holds, $(\mathcal{F},G)$ is a {\it metric $(2,1)$-generalized almost contact structure}.}\end{defin}

We added the characteristics $(2,1)$ to the name since other notions of generalized almost contact structures already exist in the literature \cite{{IW},{PW},{V3}}. The $(2,1)$-generalized almost contact structures were also introduced under a different form in \cite{Sek}, where the author looks at $\mathcal{E}_\pm=(1/2)(\mathcal{Z}_+\pm\mathcal{Z}_-)$ rather than at $\mathcal{Z}_\pm$. Notice that, if $(\mathcal{F},\mathcal{Z}_\pm)$ is a $(2,1)$-generalized almost contact structure, $L=im\,\mathcal{F}$ is a split structure in the sense of \cite{{AD1},{AD2}}.

We may extend the classical metric compatibility condition (\ref{clasmetric}) to the case of generalized F structures $\mathcal{F}$ with complementary frame and define compatibility with a generalized Riemannian metric $G\Leftrightarrow(\gamma,\psi)$ by the condition
\begin{equation}\label{metriccomplfr} \begin{array}{lcl}
G(\mathcal{F}\mathcal{X},\mathcal{F}\mathcal{Y}) =G(\mathcal{X},\mathcal{Y})&-&\sum_\alpha g(\mathcal{Z}_\alpha,\mathcal{X})g(\mathcal{Z}_\alpha,\mathcal{Y})\vspace*{2mm}\\ &+&\sum_a g(\mathcal{Z}_a,\mathcal{X})g(\mathcal{Z}_a,\mathcal{Y}).\end{array}\end{equation}

This condition implies (\ref{G-F}). Moreover, using $\mathcal{Z}_\alpha,\mathcal{Z}_a$ as arguments in (\ref{metriccomplfr}), we get $G(\mathcal{Z}_\alpha,\mathcal{F}\mathcal{X})=
G(\mathcal{Z}_a,\mathcal{F}\mathcal{X})=0$, $G(\mathcal{Z}_\alpha,\mathcal{Z}_\beta) =\delta_{\alpha,\beta}$, $G(\mathcal{Z}_a,\mathcal{Z}_b)=\delta_{ab}$, $G(\mathcal{Z}_\alpha,\mathcal{Z}_a)=0$, whence, for $\mathcal{G}$ defined by (\ref{condptGrond}), we get $\mathcal{G}\mathcal{Z}_\alpha=\mathcal{Z}_\alpha,\,
\mathcal{G}\mathcal{Z}_a=-\mathcal{Z}_a$, therefore, $\mathcal{Z}_\alpha\in V_+,Z_a\in V_-$.
Thus, we may put $\mathcal{Z}_\alpha=(Z_\alpha,\flat_{\psi+\gamma}Z_\alpha)$, $\mathcal{Z}_a= (Z_a,\flat_{\psi-\gamma}Z_a)$, where $\gamma(Z_\alpha,Z_\beta)=\delta_{\alpha\beta},
\gamma(Z_a,Z_b)=\delta_{ab}$. But,	$\gamma(Z_\alpha,Z_a)\neq0$ is possible.

Another interpretation of the conditions discussed above is obtained with the $1$-forms $\xi_\alpha=\flat_\gamma Z_\alpha,\,\xi_a=\flat_\gamma Z_a$. Since $g|_{V_\pm}=\pm \gamma\circ\tau_\pm$, it follows that (\ref{comfr}) is equivalent to
$$\begin{array}{l} F_+(Z_\alpha)=0,\;	 F_+^2=-Id+\sum_\alpha\xi_\alpha\otimes Z_\alpha,\vspace*{2mm}\\	 F_-(Z_a)=0,\; F_-^2=-Id+\sum_a\xi_a\otimes Z_a, \end{array}$$ and (\ref{metriccomplfr}) is equivalent to
$$\begin{array}{l} \gamma(F_+X,F_+Y)=\gamma(X,Y)-\sum_\alpha\xi_\alpha(X)\xi_\alpha(Y), \vspace*{2mm}\\ \gamma(F_-X,F_-Y)=\gamma(X,Y)-\sum_a\xi_a(X)\xi_a(Y),\end{array}$$
where $F_\pm$ are the classical F structures that define $\mathcal{F}$.

In particular, we proved
\begin{prop}\label{prop21metric}
A metric $(2,1)$-generalized almost contact structure is equivalent to a pair of classical metric almost contact structures $(F_\pm,Z_\pm,\xi_\pm,\gamma)$ that have the same metric $\gamma$, accompanied by a $2$-form $\psi$.\end{prop}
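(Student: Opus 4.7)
The plan is to invoke the general equivalence, recalled in Section 2, between a generalized metric F structure $(G,\mathcal{F})$ and a quadruple $(\gamma,\psi,F_\pm)$ in which $(\gamma,F_\pm)$ are classical metric F structures, and then to read off the extra almost contact data from the complementary frame $\mathcal{Z}_\pm$.

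For the forward direction, I start with a metric $(2,1)$-generalized almost contact structure $(\mathcal{F},\mathcal{Z}_-,\mathcal{Z}_+,G)$. The compatibility condition (\ref{21metriccuZpm}), being a specialization of (\ref{metriccomplfr}) to corank $2$ and negative index $1$, was shown above to force $\mathcal{G}\mathcal{Z}_+=\mathcal{Z}_+$ and $\mathcal{G}\mathcal{Z}_-=-\mathcal{Z}_-$, hence $\mathcal{Z}_\pm\in V_\pm$. Writing $\mathcal{Z}_\pm=(Z_\pm,\flat_{\psi\pm\gamma}Z_\pm)$ with $\gamma(Z_\pm,Z_\pm)=1$, I set $\xi_\pm=\flat_\gamma Z_\pm$ and use (\ref{eqJrond}) with $F_\pm$ in place of $J_\pm$ to identify the classical metric F structures underlying $\mathcal{F}$. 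The relation $\mathcal{F}\mathcal{Z}_\pm=0$ from (\ref{almctF2}) translates to $F_\pm Z_\pm=0$, and the second half of (\ref{almctF2}), when pulled back through $\tau_\pm$, becomes $F_\pm^2=-Id+\xi_\pm\otimes Z_\pm$; together with $\xi_\pm(Z_\pm)=1$ and $\xi_\pm\circ F_\pm=0$ (which follows by composing the previous relation with $F_\pm$ on the right and using $F_\pm Z_\pm=0$), this is precisely (\ref{almcont}) for each sign. Finally, the metric condition (\ref{21metriccuZpm}) on vectors of the form $(X,\flat_{\psi\pm\gamma}X),(Y,\flat_{\psi\pm\gamma}Y)\in V_\pm$ reduces to $\gamma(F_\pm X,F_\pm Y)=\gamma(X,Y)-\xi_\pm(X)\xi_\pm(Y)$, which is the classical metric condition (\ref{clasmetric}). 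Thus $(F_\pm,Z_\pm,\xi_\pm,\gamma)$ are classical metric almost contact structures sharing the metric $\gamma$, and $\psi$ is the residual $2$-form.

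For the converse, given two classical metric almost contact structures $(F_\pm,Z_\pm,\xi_\pm,\gamma)$ and a $2$-form $\psi$, I define $G\Leftrightarrow(\gamma,\psi)$ via (\ref{exprEpm}), define $\mathcal{F}$ by (\ref{eqJrond}) with $J_\pm\mapsto F_\pm$, and set $\mathcal{Z}_\pm=(Z_\pm,\flat_{\psi\pm\gamma}Z_\pm)\in V_\pm$. The verifications of (\ref{almoctZpm}), (\ref{almctF2}), and (\ref{21metriccuZpm}) are then direct computations using (\ref{almcont}) and (\ref{clasmetric}), exactly parallel to the sample computation displayed in the proof of Proposition \ref{newprop}; I would not grind through them.

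No serious obstacle is expected: the work has essentially been done in the paragraphs preceding the proposition. The only mildly delicate point is being careful that the two classical metric almost contact structures are only required to share the Riemannian metric $\gamma$, so that in general $\gamma(Z_+,Z_-)\ne 0$ is allowed, matching the remark made earlier that $\mathcal{Z}_+\in V_+$ and $\mathcal{Z}_-\in V_-$ are $g$-orthogonal but the projections $Z_\pm$ need not be $\gamma$-orthogonal.
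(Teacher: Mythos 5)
Your plan follows the paper's own route: it derives $\mathcal{Z}_\pm\in V_\pm$ from the compatibility condition, transfers $\mathcal{F}$ and the frame through $\tau_\pm$ to obtain the classical relations (\ref{almcont}) and (\ref{clasmetric}) for $(F_\pm,Z_\pm,\xi_\pm,\gamma)$, and reverses the construction for the converse --- which is exactly the content of the paragraphs the paper summarizes with ``In particular, we proved''. The argument is correct, including the observation that $\gamma(Z_+,Z_-)$ need not vanish.
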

\begin{corol}\label{corol21indus} 1. The induced generalized structure of an orientable hypersurface of a generalized almost Hermitian manifold is metric. 2. A classical, metric, almost contact structure $(F,Z,\xi,\gamma)$ seen as a generalized (2,1)-almost contact structure and $G$ defined by $(\gamma,\psi=0)$ satisfy the metric compatibility (\ref{21metriccuZpm}).\end{corol}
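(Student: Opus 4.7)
For part (1) I would simply appeal to Proposition~\ref{newprop}. The identity (\ref{21metriccuZpm}) established there for the induced structure is precisely the metric compatibility condition (\ref{metriccomplfr}) of Definition~\ref{21genct} specialized to a single pair of complementary frame vectors, i.e., to corank $2$ and negative index $1$. Combined with (\ref{almoctZpm}) and (\ref{almctF2}), which guarantee that $\mathcal{Z}_\pm$ form a complementary frame of the correct type and that $\mathcal{F}$ has corank $2$ and negative index $1$, this shows that $(\mathcal{F},G',\mathcal{Z}_\pm)$ satisfies every requirement in the definition of a metric $(2,1)$-generalized almost contact structure.

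For part (2), my plan is to invoke Proposition~\ref{prop21metric} in order to reduce the generalized compatibility to two classical ones. With $\psi=0$ the eigenbundles are $V_\pm=\{(X,\pm\flat_\gamma X)\mid X\in TM\}$. A short direct check, using the skew-symmetry $\gamma(FX,Y)+\gamma(X,FY)=0$ from (\ref{Fmetric}) to rewrite $-(\flat_\gamma X)\circ F=\flat_\gamma(FX)$, shows that
\[
\mathcal{F}(X,\pm\flat_\gamma X)=(FX,\pm\flat_\gamma(FX)),
\]
so $\mathcal{F}$ preserves $V_\pm$ and the transfers via $\tau_\pm$ defined in (\ref{eqtransfer}) yield $F_+=F_-=F$, with $Z_\pm=Z$ and $\xi_\pm=\xi$. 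By Proposition~\ref{prop21metric} the metric $(2,1)$-compatibility (\ref{21metriccuZpm}) is equivalent to the two classical identities $\gamma(F_\pm X,F_\pm Y)=\gamma(X,Y)-\xi_\pm(X)\xi_\pm(Y)$, and both of them coincide with the assumed (\ref{clasmetric}).

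No serious obstacle is anticipated: the whole argument is bookkeeping. The only place where one must be attentive is the verification that $\mathcal{F}(V_\pm)\subseteq V_\pm$, which is where the classical skew-symmetry of $F$ with respect to $\gamma$ intervenes; everything else is a direct application of Propositions~\ref{newprop} and~\ref{prop21metric}.
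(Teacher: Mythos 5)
Your proposal is correct and follows essentially the same route as the paper: part (1) is exactly the paper's appeal to Proposition~\ref{newprop} (equations (\ref{almoctZpm})--(\ref{21metriccuZpm})), and part (2) is the paper's reduction to the classical pair with $F_+=F_-=F$, $Z_+=Z_-=Z$ via Proposition~\ref{prop21metric}, which you merely spell out in more detail (your verification that $\mathcal{F}$ preserves $V_\pm$ using $\flat_\gamma(FX)=-(\flat_\gamma X)\circ F$ is sound). The only item in the paper's proof you omit is the incidental observation that $\gamma(Z_+,Z_-)=1\neq 0$ in case (2), which is not needed for the conclusion.
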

\begin{proof} 1. This follows from the equality (\ref{21metriccuZpm}) as well as from the corresponding pair of classical structures. 2. With $Z_+=Z_-=Z$, we get a corresponding classical pair. In this case, $\gamma(Z_+,Z_-)=1\neq0$.\end{proof}

For any $(2,1)$-generalized almost contact structure $\mathcal{F}$ there exists a naturally associated generalized almost complex structure $\mathcal{J}$ on the manifold $M\times\mathds{R}$. Indeed, for the latter we have
\begin{equation}\label{desctxR}\mathbf{T}(M\times\mathds{R})
=im\,\mathcal{F}\oplus_{\perp_g} span\{\mathcal{Z}_+,\mathcal{Z}_-\} \oplus_{\perp_g} span\{\mathcal{T}_+,\mathcal{T}_-\},
\end{equation}
where $g$ is the pairing metric of $M\times\mathds{R}$ and $\mathcal{T}_+=(\partial/\partial t,dt),\mathcal{T}_-=(-\partial/\partial t,dt)$, $t$ being the coordinate on $\mathds{R}$.
On the sum of the last two terms of (\ref{desctxR}), we have the complex structure
\begin{equation}\label{fKrond} \mathcal{K}(\mathcal{T}_+)=-\mathcal{Z}_+, \,\mathcal{K}(\mathcal{T}_-)=\mathcal{Z}_-,\;\mathcal{K}^2=-Id,\end{equation}
which extends to $\mathcal{K}\in End(\mathbf{T}(M\times\mathds{R}))$ by asking $\mathcal{K}\circ\mathcal{F}=0$.
If we also extend $\mathcal{F}$ by $\mathcal{F}\circ\mathcal{K}=0$, then, $\mathcal{J}=\mathcal{F}+\mathcal{K}$ is the announced generalized almost complex structure of $M\times\mathds{R}$. (The $g$-skew symmetry of $\mathcal{J}$ is easily checked on the basis $\mathcal{Z}_\pm,\mathcal{T}_\pm$.) Explicitly, we have
\begin{equation}\label{JptFrond}\begin{array}{lcl} \mathcal{J}&=&\mathcal{F} +(\flat_g\mathcal{Z}_+)\otimes\mathcal{T}_+ +(\flat_g\mathcal{Z}_-)\otimes\mathcal{T}_- \vspace*{2mm}\\
&-&(\flat_g\mathcal{T}_+)\otimes\mathcal{Z}_+
-(\flat_g\mathcal{T}_-)\otimes\mathcal{Z}_-.\end{array}\end{equation}
\begin{rem}\label{dimens21} {\rm (1) In fact, there exists a one-parameter family of such structures since we may use the vectors of any $g$-pseudo-orthonormal basis of $\mathbf{T}\mathds{R}\approx\mathds{R}^2$ instead of $\mathcal{T}_\pm$ chosen above. (2) The existence of $\mathcal{J}$ implies the even dimensionality of $M\times\mathds{R}$. Hence, a $(2,1)$-generalized almost contact structure may exist only on odd-dimensional manifolds $M$.
(3) For the classical almost contact structure $(F,Z,\xi)$ seen as a $(2,1)$-generalized structure, formula (\ref{JptFrond}) yields the generalized almost complex structure defined by the classical structure (\ref{JF}).}\end{rem}

Inspired by \cite{GY}, we notice one more interesting structure that exists on a $(2,1)$-generalized almost contact manifold, namely, the complex endomorphism $\Phi\in End(\mathbf{T}M)$ defined by the formula
\begin{equation}\label{eqPhi} \Phi=i\mathcal{F}+(\flat_g\mathcal{Z}_+)\otimes\mathcal{Z}_- -(\flat_g\mathcal{Z}_-)\otimes\mathcal{Z}_+.\end{equation}
$\Phi$ has the following properties
$$ g(\Phi\mathcal{X},\mathcal{Y})+g(\mathcal{X},\Phi\mathcal{Y})=0,\; \Phi^2=Id,$$
hence, it may be seen as a complex analogue of a generalized almost product structure \cite{V}.
The structure $\Phi$ is equivalent with the decomposition
\begin{equation}\label{eqGY}
\mathbf{T}_cM=[\bar{E}\oplus span\{\mathcal{Z}_++\mathcal{Z}_-\}] \oplus[E\oplus span\{\mathcal{Z}_+-\mathcal{Z}_-\}],\end{equation}
where the square  brackets are the $\pm1$-eigenbundles of $\Phi$.

In the terminology of \cite{{PW},{Sek}}, one of the terms of (\ref{eqGY}) is closed under Courant brackets iff we are in the case of a generalized contact structure and both terms are closed iff we have a strong generalized contact structure. Using eigenvectors of $\Phi$ as arguments it is easy to check that the structure is generalized contact iff one of the equalities $\mathcal{N}_\Phi\pm\mathcal{N}_{\bar{\Phi}}=0$ holds and it is strong generalized contact iff $\mathcal{N}_{\Phi}=0$.

Furthermore, if $\mathcal{F}$ is metric with respect to $G$, i.e., (\ref{metriccomplfr}) holds, $\Phi$ has the following property
\begin{equation}\label{PhiG}
G(\Phi\mathcal{X},\Phi\mathcal{Y})=-G(\mathcal{X},\mathcal{Y}).\end{equation}
\begin{example}\label{exptPhiclasic} {\rm
In the case of a classical almost contact structure, seen as a $(2,1)$-generalized structure, formula (\ref{eqPhi}) yields
\begin{equation}\label{Phiptclasic} \Phi(X,\alpha)= (\phi X,-\alpha\circ\phi),\end{equation}
where $\phi\in End(T^cM)$ is defined by
$$\phi X=iFX+\xi(X)Z.$$
The endomorphism $\phi$ has the property $\phi^2=Id$ and has the $\mp 1$-eigenbundles $H,\bar{H}\oplus Q_c$, respectively, where $H,\bar{H}$ are the $\pm i$-eigenbundles of $F$ and $Q=span\,\{Z\}$. As a consequence, it follows that a classical CRF almost contact structure is characterized by $\mathcal{N}_\phi=0$.}\end{example}

If the classical structure is compatible with the Riemannian metric $\gamma$ and $G\Leftrightarrow(\gamma,\psi)$, the equality (\ref{PhiG}) reduces to $\gamma(\phi Y,\phi Y)=-\gamma(X,Y)$. This is an interesting remark since it shows that a metric, almost contact structure produces generalized almost para-Hermitian structures given by the triples $(\gamma,\psi,\phi)$. (See \cite{V5}, Proposition 2.2.)
Furthermore, if we use a closed form $\psi$, the corresponding structure is generalized para-K\"ahler iff the initially given classical metric almost contact structure is cosymplectic in the sense of Blair. Indeed, in Example \ref{CRFKclasic} we saw that a cosymplectic manifold is classical CRF, hence, $\mathcal{N}_\phi=0$ and the eigenbundles of $\phi$ indicated above are involutive. Then, $\nabla^\gamma F=0,\nabla^\gamma Z=0$ imply $\nabla^\gamma \phi=0$. Thus, the integrability conditions required by Theorem 3.1 of \cite{V5} hold. Conversely, $\nabla^\gamma \phi=0$ implies $\nabla^\gamma F=0$.
\section{Normal $(2,1)$-generalized structures}
In this section we study the normality of $(2,1)$-generalized almost contact structures.
\begin{defin}\label{normalgen} {\rm A $(2,1)$-generalized almost contact structure on $M$ is {\it normal} if the associated generalized almost complex structure $\mathcal{J}$ defined by (\ref{JptFrond}) on $M\times\mathds{R}$ is integrable.}\end{defin}
\begin{rem}\label{defimplicaclasic} {\rm In view of Remark \ref{dimens21} (3), the $(2,1)$-generalized almost contact structure defined by a classical almost contact structure $(F,Z,\xi)$ is normal in the sense of Definition \ref{normalgen} iff it is normal in the classical sense.}\end{rem}
\begin{prop}\label{normalFrond}
The $(2,1)$-generalized almost contact structure $(\mathcal{F},\mathcal{Z}_\pm)$ with associated generalized almost complex structure $\mathcal{J}$ is normal iff the following conditions hold
\begin{equation}\label{normaltotal}
[\mathcal{Z}_+,\mathcal{Z}_-]=0,\; [\mathcal{Z}_\pm,\mathcal{F}\mathcal{X}]=\mathcal{F}[\mathcal{Z}_\pm,\mathcal{X}],\;
\mathcal{N}_{\mathcal{F}}(\mathcal{X},\mathcal{Y})=pr_S[\mathcal{X},\mathcal{Y}],
\end{equation}
where $S=ker\,\mathcal{F}$ and $\mathcal{X},\mathcal{Y}\in L=im\,\mathcal{F}$.
\end{prop}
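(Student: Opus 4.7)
My plan is to use the standard characterization that $\mathcal{J}$ is integrable iff $\Gamma\mathcal{L}_{\mathcal{J}}$ is closed under the Courant bracket, where $\mathcal{L}_{\mathcal{J}}\subseteq\mathbf{T}_c(M\times\mathds{R})$ is the $i$-eigenbundle of $\mathcal{J}$, and to reduce the check to a convenient generating set adapted to the splitting (\ref{desctxR}) by exploiting the maximal $g$-isotropy of $\mathcal{L}_{\mathcal{J}}$ together with (\ref{propofC}).

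First I would diagonalize $\mathcal{K}$: from (\ref{fKrond}) and $\mathcal{K}^2=-Id$ one gets $\mathcal{K}\mathcal{Z}_+=\mathcal{T}_+$ and $\mathcal{K}\mathcal{Z}_-=-\mathcal{T}_-$, so the $i$-eigenvectors of $\mathcal{K}$ inside $span\{\mathcal{Z}_\pm,\mathcal{T}_\pm\}$ are $\mathcal{A}_+=\mathcal{T}_++i\mathcal{Z}_+$ and $\mathcal{A}_-=\mathcal{T}_--i\mathcal{Z}_-$. Together with the $i$-eigenbundle $E$ of $\mathcal{F}$, extended trivially to $M\times\mathds{R}$ via the projection on $M$, they give $\mathcal{L}_{\mathcal{J}}=E\oplus span_{\mathbb{C}}\{\mathcal{A}_+,\mathcal{A}_-\}$. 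A $C^\infty(M\times\mathds{R})$-spanning set for $\Gamma\mathcal{L}_{\mathcal{J}}$ is thus formed by local pullbacks of sections of $E$ together with the two global sections $\mathcal{A}_\pm$.

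Next I would compute the relevant brackets directly from the definition of the Courant bracket. Since $\partial/\partial t$ and $dt$ are $t$-translation invariant and annihilate any object pulled back from $M$, one obtains $[\mathcal{T}_+,\mathcal{T}_-]=0$ and $[\mathcal{T}_\pm,\mathcal{X}]=0$ for every pullback $\mathcal{X}$ from $M$. Hence the three non-trivial bracket types among the generators reduce as follows: $[\Gamma E,\Gamma E]$ coincides with the Courant bracket on $M$, lies in $\mathbf{T}_cM\cap\mathcal{L}_{\mathcal{J}}=E$, and therefore its closure amounts to integrability of $\mathcal{F}$, equivalently (by the discussion recalled in Section 2) to $\mathcal{N}_{\mathcal{F}}(\mathcal{X},\mathcal{Y})=pr_S[\mathcal{X},\mathcal{Y}]$ on $L$; $[\mathcal{A}_+,\mathcal{A}_-]=[\mathcal{Z}_+,\mathcal{Z}_-]$, a real element of $\mathbf{T}M$, must lie in $E\cap\bar{E}=0$, giving $[\mathcal{Z}_+,\mathcal{Z}_-]=0$; and $[\mathcal{A}_\pm,\Gamma E]=\pm i[\mathcal{Z}_\pm,\Gamma E]$ must lie in $E$, giving $[\mathcal{Z}_\pm,\Gamma E]\subseteq\Gamma E$.

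The last step, which I expect to be the main obstacle, is to show that $[\mathcal{Z}_\pm,\Gamma E]\subseteq\Gamma E$ combined with $[\mathcal{Z}_+,\mathcal{Z}_-]=0$ is equivalent to the intertwining $[\mathcal{Z}_\pm,\mathcal{F}\mathcal{X}]=\mathcal{F}[\mathcal{Z}_\pm,\mathcal{X}]$ for every $\mathcal{X}\in\Gamma\mathbf{T}M$. Decomposing $\mathcal{X}=\mathcal{X}_E+\mathcal{X}_{\bar E}+\mathcal{X}_S$ along $\mathbf{T}_cM=E\oplus\bar E\oplus S_c$, the intertwining is immediate on $E$ (where $\mathcal{F}$ acts as $i\,Id$) from the closure on $\Gamma E$; it follows on $\bar E$ by complex conjugation because $\mathcal{Z}_\pm$ is real; and on $S=span\{\mathcal{Z}_+,\mathcal{Z}_-\}$ it holds trivially, since the Courant bracket formula gives $[\mathcal{Z}_\pm,\mathcal{Z}_\pm]=0$ at once, while the first condition of (\ref{normaltotal}) provides $[\mathcal{Z}_+,\mathcal{Z}_-]=0$, so $[\mathcal{Z}_\pm,\Gamma S_c]=0$ and both sides of the intertwining vanish on $S$. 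The delicate point is precisely that the intertwining phrased via $\mathcal{F}$ is a priori strictly stronger than closure on $E$ alone, and the discrepancy is absorbed exactly by the first condition of (\ref{normaltotal}) together with the elementary vanishing of self-brackets. Assembling the three equivalences yields the proposition.
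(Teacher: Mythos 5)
Your proof is correct in substance but follows a genuinely different route from the paper. The paper verifies $\mathcal{N}_{\mathcal{J}}=0$ directly on the six types of basic argument pairs drawn from $\{\mathcal{T}_\pm,\mathcal{Z}_\pm,\mathcal{X}\in L\}$, extracting the three conditions (\ref{normaltotal}) by separating the $\mathbf{T}M$- and $span\{\mathcal{T}_\pm\}$-components of each identity; you instead pass to the $i$-eigenbundle $\mathcal{L}_{\mathcal{J}}=E\oplus span\{\mathcal{A}_+,\mathcal{A}_-\}$ after explicitly diagonalizing $\mathcal{K}$ (your computation $\mathcal{K}\mathcal{Z}_+=\mathcal{T}_+$, $\mathcal{K}\mathcal{Z}_-=-\mathcal{T}_-$ and the isotropy of $span\{\mathcal{A}_\pm\}$ both check out), and reduce integrability to Courant closure on a generating set. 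Your approach buys a cleaner bookkeeping: the facts $[\mathcal{T}_\pm,\mathcal{T}_\mp]=0$ and $[\mathcal{T}_\pm,\mathcal{X}]=0$ for pullbacks make the three conditions fall out one per bracket type, and maximal isotropy plus (\ref{propofC}) disposes of the tensoriality issue that the paper has to address separately at the start of its proof. The paper's computation, on the other hand, produces as by-products the intermediate identities (\ref{integr43}), (\ref{integr42}) and (\ref{ptobsCRF}) that are reused later in the proof of Proposition \ref{normaldaPhi}.

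One caveat: your final claim that the intertwining $[\mathcal{Z}_\pm,\mathcal{F}\mathcal{X}]=\mathcal{F}[\mathcal{Z}_\pm,\mathcal{X}]$ extends to \emph{every} $\mathcal{X}\in\Gamma\mathbf{T}M$, via $[\mathcal{Z}_\pm,\Gamma S_c]=0$, is an overstatement. For a general section $f\mathcal{Z}_++h\mathcal{Z}_-$ of $S$ the bracket picks up the term $-g(\mathcal{Z}_\pm,\mathcal{Z}_\pm)\partial f$ from (\ref{propofC}), which does not lie in $ker\,\mathcal{F}$, so the identity fails off $L$ for non-constant coefficients; this is exactly why the proposition (and the paper's opening remark in its proof) restricts the arguments to $L$, where $g(\mathcal{Z}_\pm,\mathcal{X})=0$ makes the condition $C^\infty(M)$-linear. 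Since the statement you are proving only requires $\mathcal{X}\in L$, and on $L$ your equivalence between closure of $\Gamma E$ under $[\mathcal{Z}_\pm,\cdot]$ and the intertwining is sound (decompose $\mathcal{X}=\mathcal{X}_E+\mathcal{X}_{\bar E}$ and use reality of $\mathcal{Z}_\pm$ and $\mathcal{F}$), the proof of the proposition itself is unaffected; just delete the spurious extension to $S$.
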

\begin{proof}
Before we start, let us notice that	 the condition $\mathcal{X},\mathcal{Y}\in L$ ensures the invariance of conditions (\ref{normaltotal}) under the multiplication of these arguments by a function (use (\ref{propofC})).

We shall write down the integrability condition $\mathcal{N}_{\mathcal{J}}=0$ for all the basic pairs of arguments: (i) $(\mathcal{T}_+,\mathcal{T}_-)$, (ii)
$(\mathcal{T}_\pm,\mathcal{Z}_\pm)$, (iii) $(\mathcal{Z}_+,\mathcal{Z}_-)$, (iv) $(\mathcal{X}\in L,\mathcal{Z}_\pm)$, (v) $(\mathcal{X}\in L,\mathcal{T}_\pm)$, (vi) $(\mathcal{X}\in L,\mathcal{Y}\in L)$.

The three cases (i), (ii), (iii) yield a single condition, namely
$[\mathcal{Z}_+,\mathcal{Z}_-]=0$. In case (iv), since $\mathcal{J}=\mathcal{F}+\mathcal{K}$ and $\mathcal{J}\mathcal{X}=\mathcal{F}\mathcal{X}$ for $\mathcal{X}\in L$, the condition $\mathcal{N}_{\mathcal{J}}(\mathcal{X},\mathcal{Z}_\pm)=0$ becomes
\begin{equation}\label{integr4} [\mathcal{X},\mathcal{Z}_\pm]+ \mathcal{F}[\mathcal{F}\mathcal{X},\mathcal{Z}_\pm]= -\mathcal{K}[\mathcal{F}\mathcal{X},\mathcal{Z}_\pm].\end{equation}

Obviously, $\mathcal{K}(\mathbf{T}M)\subseteq span\{\mathcal{T}_\pm\}$. Accordingly, since the Courant brackets in (\ref{integr4}) belong to $\Gamma\mathbf{T}M$, (\ref{integr4}) is equivalent to the vanishing of its two sides. But, $\forall\mathcal{U}\in\mathbf{T}M$, $\mathcal{K}\mathcal{U}=\mathcal{K}(pr_S\mathcal{U})$, hence, $\mathcal{K}\mathcal{U}=0$ is equivalent to $\mathcal{U}\in L$, therefore, (\ref{integr4}) means
\begin{equation}\label{integr43} [\mathcal{X},\mathcal{Z}_\pm]+\mathcal{F}[\mathcal{F}\mathcal{X},\mathcal{Z}_\pm,]=0,
\;[\mathcal{F}\mathcal{X},\mathcal{Z}_\pm]\in L.\end{equation}
In (\ref{integr43}) we may replace $\mathcal{X}\in L$ by $\mathcal{F}\mathcal{X}$, with the consequence $\mathcal{F}^2\mathcal{X}=-\mathcal{X}$. Then, the first condition (\ref{integr43}) becomes the second condition (\ref{normaltotal}) and it implies the second condition, which may also be seen as \begin{equation}\label{integr42}\mathcal{X}\in L\,\Rightarrow\,[\mathcal{Z}_\pm,\mathcal{X}]\in L.\end{equation}

In case (v) one gets the same result as in case (iv).

Finally, in case (vi), we have $\mathcal{J}\mathcal{X}=\mathcal{F}\mathcal{X},\mathcal{J}\mathcal{Y}=\mathcal{F}\mathcal{Y}$
and $\mathcal{N}_\mathcal{F}(\mathcal{X},\mathcal{Y})=0$ becomes (use also (\ref{prScuframe}))
\begin{equation}\label{integr6} \mathcal{N}_{\mathcal{F}}(\mathcal{X},\mathcal{Y}) -pr_S[\mathcal{X},\mathcal{Y}] =\mathcal{K}([\mathcal{F}\mathcal{X},\mathcal{Y}] +[\mathcal{X},\mathcal{F}\mathcal{Y}]).\end{equation}
As in the case of (\ref{integr4}), (\ref{integr6}) is equivalent to
\begin{equation}\label{ptobsCRF}
\mathcal{N}_{\mathcal{F}}(\mathcal{X},\mathcal{Y})=pr_S[\mathcal{X},\mathcal{Y}], \;[\mathcal{F}\mathcal{X},\mathcal{Y}]+[\mathcal{X},\mathcal{F}\mathcal{Y}]\in L.\end{equation}
The first condition is the last condition (\ref{normaltotal}). As we know, this is the integrability condition of the generalized F structure $\mathcal{F}$ and it ensures the closure of the $i$-eigenbundle $E$ of $\mathcal{F}$ under Courant brackets. This allows us to see that the second condition (\ref{ptobsCRF}) is implied by looking at the three possible cases of arguments $\mathcal{X},\mathcal{Y}$ in $E,\bar{E}$.

Therefore, the normality of the structure is characterized by the conditions (\ref{normaltotal}).
\end{proof}
\begin{rem}\label{obsparametru} {\rm The proof of Proposition \ref{normalFrond} is valid if $\mathcal{T}_\pm$ are replaced by any $g$-pseudo-orthonormal basis of $\mathbf{T}\mathds{R}$, hence, if $\mathcal{F}$ is normal, $M\times\mathds{R}$ has a one-parameter family of generalized complex structures.}\end{rem}
\begin{rem}\label{obsgennorm} {\rm As in the classical case, the three conditions (\ref{normaltotal}) may be unified into a single expression. To do that, we use the {\it naive exterior differential} of a Courant algebroid, which leads to the {\it naive cohomology} \cite{SX}. In particular, for $\mathcal{U}\in\Gamma\mathbf{T}M$, we define $d_C\mathcal{U}:\Gamma\wedge^2\mathbf{T}M
\rightarrow C^\infty(M)$ by
$$d_C\mathcal{U}(\mathcal{X},\mathcal{Y})=pr_{TM}\mathcal{X}(g(\mathcal{U},\mathcal{Y}))
-pr_{TM}\mathcal{Y}(g(\mathcal{U},\mathcal{X}))-g(\mathcal{U},[\mathcal{X},\mathcal{Y}]).$$
This operator is not bilinear over $C^\infty(M)$.

However, using the basic type arguments as in the proof of Proposition \ref{normalFrond}, it is easy to see that (\ref{normaltotal}) is equivalent to
\begin{equation}\label{normtotal2}
\mathcal{N}_{\mathcal{F}}(\mathcal{X},\mathcal{Y})+ d_C\mathcal{Z}_+(\mathcal{X},\mathcal{Y})\mathcal{Z}_+ -d_C\mathcal{Z}_-(\mathcal{X},\mathcal{Y})\mathcal{Z}_-=0\; (\forall\mathcal{X},\mathcal{Y}\in\Gamma\mathbf{T}M)\end{equation}
and using (\ref{propofC}) we see
that the left hand side of (\ref{normtotal2}) is $C^\infty(M)$-bilinear.}\end{rem}
\begin{rem}\label{normdaCRF} {\rm During the proof, we noticed that normality implies integrability of $\mathcal{F}$, in other words, in the case of a normal structure, $\mathcal{F}$ is a CRF structure.}\end{rem}

We will also prove the following proposition.
\begin{prop}\label{normaldaPhi} If $(\mathcal{F},\mathcal{Z}_\pm)$ is a normal $(2,1)$-generalized almost contact structure, the endomorphism $\Phi$ defined by (\ref{eqPhi}) satisfies the integrability condition $\mathcal{N}_\Phi=0$.\end{prop}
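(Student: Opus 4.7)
The strategy is to exploit that $\Phi^2=Id$, so $\mathcal{N}_\Phi$ vanishes exactly when the two $\pm1$-eigenbundles of $\Phi$ are closed under Courant brackets. Indeed, a direct expansion of (\ref{Nijtens}) on eigenvectors shows that, for $\mathcal{X},\mathcal{Y}$ in the same $\epsilon$-eigenbundle ($\epsilon=\pm1$), one has
$$
\mathcal{N}_\Phi(\mathcal{X},\mathcal{Y})=2\bigl([\mathcal{X},\mathcal{Y}]-\epsilon\,\Phi[\mathcal{X},\mathcal{Y}]\bigr),
$$
while on mixed arguments $\mathcal{N}_\Phi$ vanishes identically. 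Using the decomposition (\ref{eqGY}), the task thus reduces to verifying that the two bundles $E\oplus\mathrm{span}\{\mathcal{Z}_+-\mathcal{Z}_-\}$ and $\bar E\oplus\mathrm{span}\{\mathcal{Z}_++\mathcal{Z}_-\}$ are involutive under the Courant bracket.

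The check splits into three types of pairs. First, $[E,E]\subseteq E$ is just the CRF integrability of $\mathcal{F}$, which by Remark \ref{normdaCRF} is a consequence of the third condition (\ref{normaltotal}); symmetrically for $[\bar E,\bar E]\subseteq\bar E$. Second, the brackets among $\mathcal{Z}_\pm$ vanish: the Courant bracket (as stated in Section 2) is antisymmetric, so $[\mathcal{Z}_\pm,\mathcal{Z}_\pm]=0$ automatically, and the first condition (\ref{normaltotal}) gives $[\mathcal{Z}_+,\mathcal{Z}_-]=0$; hence $[\mathcal{Z}_+\mp\mathcal{Z}_-,\mathcal{Z}_+\mp\mathcal{Z}_-]=0$ for either choice of signs.

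The key remaining case is mixed brackets of the form $[E,\mathcal{Z}_\pm]$. By complex linear extension of the second condition (\ref{normaltotal}), for $\mathcal{X}\in\Gamma E$ one has $\mathcal{F}[\mathcal{Z}_\pm,\mathcal{X}]=[\mathcal{Z}_\pm,\mathcal{F}\mathcal{X}]=i[\mathcal{Z}_\pm,\mathcal{X}]$. Decomposing $[\mathcal{Z}_\pm,\mathcal{X}]=\mathcal{Y}_E+\mathcal{Y}_{\bar E}+\mathcal{Y}_S$ according to $\mathbf{T}_cM=E\oplus\bar E\oplus S_c$, the identity $\mathcal{F}(\mathcal{Y}_E+\mathcal{Y}_{\bar E})=i(\mathcal{Y}_E+\mathcal{Y}_{\bar E}+\mathcal{Y}_S)$ forces, by comparison in $L_c$ and $S_c$, both $\mathcal{Y}_{\bar E}=0$ and $\mathcal{Y}_S=0$, so $[\mathcal{Z}_\pm,\mathcal{X}]\in\Gamma E$. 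By antisymmetry $[\mathcal{X},\mathcal{Z}_+-\mathcal{Z}_-]\in\Gamma E$, and the $+1$-eigenbundle of $\Phi$ is thus involutive; the analogous argument with $\bar E$ and $\mathcal{Z}_++\mathcal{Z}_-$ handles the other eigenbundle.

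The main obstacle is not computational but bookkeeping: one must be careful that the algebraic condition in (\ref{normaltotal})(2) was stated for $\mathcal{X}\in L$, yet is applied here to complex sections of $E\subseteq L_c$. The complex linearity of the Courant bracket makes this extension automatic, and no further delicate identities (such as the $\partial$-term in (\ref{propofC})) enter the argument.
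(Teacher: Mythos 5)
Your argument is correct, but it is organized differently from the paper's. The paper works with the real decomposition $\mathbf{T}M=L\oplus S$ and computes $\mathcal{N}_\Phi$ directly on the pairs $(\mathcal{X},\mathcal{Y}\in L)$, $(\mathcal{X}\in L,\mathcal{Z}_\pm)$ and $(\mathcal{Z}_+,\mathcal{Z}_-)$, writing $\Phi\mathcal{X}=i\mathcal{F}\mathcal{X}+pr_S\mathcal{X}$ via (\ref{prScuframe}) and substituting the auxiliary facts $[\mathcal{F}\mathcal{X},\mathcal{Y}]+[\mathcal{X},\mathcal{F}\mathcal{Y}]\in L$ and $[\mathcal{Z}_\pm,\mathcal{X}]\in L$ extracted during the proof of Proposition \ref{normalFrond} so as to replace $\Phi$ by $i\mathcal{F}$; the three conditions (\ref{normaltotal}) then kill the three cases one by one. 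You instead pass to the complex eigenbundle decomposition (\ref{eqGY}) and reduce everything to involutivity of the two eigenbundles of $\Phi$, which makes the mixed-eigenvalue case vanish for free and turns the $(L,L)$ case into the bare statement $[E,E]\subseteq E$, i.e.\ the CRF integrability of $\mathcal{F}$; your eigenvalue-comparison argument forcing $[\mathcal{Z}_\pm,\mathcal{X}]\in E$ is a clean substitute for the paper's use of (\ref{integr42}) and (\ref{integr43}). The same three hypotheses (\ref{normaltotal}) do the same three jobs, so the mathematical content is the same; what your version buys is that it directly exhibits normality as implying the ``strong generalized contact'' property (involutivity of both terms of (\ref{eqGY})), which the paper only records afterwards as a consequence of the proposition. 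Two small caveats: your $\pm1$ labels are swapped relative to (\ref{eqGY}) (it is $\bar E\oplus span\{\mathcal{Z}_++\mathcal{Z}_-\}$ that is the $+1$-eigenbundle), which is harmless; and your closing remark that the $\partial$-term of (\ref{propofC}) never enters is slightly too quick --- it does appear once you multiply the generating sections $\mathcal{Z}_+\mp\mathcal{Z}_-$ and the sections of $E$ by functions, but it cancels because the eigenbundles of $\Phi$ are $g$-isotropic ($\Phi$ being $g$-skew with $\Phi^2=Id$), and this isotropy is also what legitimizes verifying involutivity, and hence the tensorial identity $\mathcal{N}_\Phi=0$, on a generating frame.
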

\begin{proof}
In view of (\ref{prScuframe}), we have
$\Phi\mathcal{X}=i\mathcal{F}\mathcal{X}+pr_S\mathcal{X}$.
We shall compute $\mathcal{N}_\Phi$ for pairs of arguments (a) $\mathcal{X},\mathcal{Y}\in L$, (b) $\mathcal{X}\in L,\mathcal{Z}_\pm$, (c) $\mathcal{Z}_+,\mathcal{Z}_-$.
In case (c) $\mathcal{N}_\Phi(\mathcal{Z}_+,\mathcal{Z}_-)=0$ because of the first condition (\ref{normaltotal}).
In case (a), $\Phi\mathcal{X}=i\mathcal{F}\mathcal{X},\Phi\mathcal{Y}=i\mathcal{F}\mathcal{Y}$ and we get
\begin{equation}\label{nPhipeLa} \mathcal{N}_\Phi(\mathcal{X},\mathcal{Y})=
-[\mathcal{F}\mathcal{X},\mathcal{F}\mathcal{Y}] -i\Phi([\mathcal{F}\mathcal{X},\mathcal{Y}]
+[\mathcal{X},\mathcal{F}\mathcal{Y}])+[\mathcal{X},\mathcal{Y}].
\end{equation}
Since the structure is normal, the second condition (\ref{ptobsCRF}) allows us to replace $\Phi$ by $i\mathcal{F}$ in the right hand side of (\ref{nPhipeLa}), which (modulo the last condition (\ref{normaltotal})) leads to
$$\mathcal{N}_\Phi(\mathcal{X},\mathcal{Y})= -\mathcal{N}_\mathcal{F}(\mathcal{X},\mathcal{Y})+pr_S[\mathcal{X},\mathcal{Y}]=0.$$
In case (b) and, again in view of normality, we may use (\ref{integr42}) and we get
$$\mathcal{N}_\Phi(\mathcal{X},\mathcal{Z}_\pm)= i([\mathcal{F}\mathcal{X},\mathcal{Z}_\pm]-\mathcal{F}[\mathcal{X},\mathcal{Z}_\pm])
+([\mathcal{X},\mathcal{Z}_\pm]+\mathcal{F}[\mathcal{F}\mathcal{X},\mathcal{Z}_\pm])=0,
$$
where the last equality sign is justified by the second condition (\ref{normalFrond}) and (\ref{integr43}).
\end{proof}

Proposition \ref{normaldaPhi} shows that the normal structures are strong generalized contact structures, whence, we see that Proposition \ref{normalFrond} is equivalent to Proposition 4.2 of \cite{Sek}.

Among others, the interest of Proposition \ref{normaldaPhi} comes from the fact that the condition $\mathcal{N}_\Phi=0$ may be treated in exactly the same way as the integrability condition of a generalized almost product structure, except for the fact that everything will be over the complex field $\mathds{C}$ instead of $\mathds{R}$. In particular, we may consider a matrix representation of $\Phi$ via complex tensors $A\in End(T_cM),\sigma\in\Omega^2(M,\mathds{C}),\pi\in\wedge^2T_cM$ and, in the integrable case, $\pi$ is a complex Poisson bivector field \cite{V}. Formula (\ref{Phiptclasic}) shows that the Poisson structure of a classical almost contact structure is zero.

Now, we consider normality in the metric case.
Let $(\mathcal{F},\mathcal{Z}_\pm,G)$ be a metric $(2,1)$-generalized almost contact structure, $\mathcal{F}'$ its associated second structure (see Section 2) and $\mathcal{Z}'_\pm=\pm\mathcal{Z}_\pm$. Using, (\ref{condptGrond}), it is easy to see that $\mathcal{G}$ commutes with $\mathcal{F}$, $\mathcal{F}'=\mathcal{G}\circ\mathcal{F}$ and $\mathcal{F}'\circ\mathcal{F}=\mathcal{F}\circ\mathcal{F}'$. Accordingly, $\mathcal{F}'\mathcal{Z}'_\pm=0,\,\mathcal{F}^{'2}=\mathcal{F}^2$ and it follows that $(\mathcal{F}',\mathcal{Z}'_\pm,G)$ is again a metric $(2,1)$-generalized almost contact structure. \begin{defin}\label{binormal} {\rm The structure $(\mathcal{F},\mathcal{Z}_\pm,G)$ is {\it binormal} if it is normal and the structure $(\mathcal{F}',\mathcal{Z}'_\pm,G)$ is also normal.}\end{defin}
\begin{rem}\label{obsGT} {\rm
If we put $\mathcal{E}_\pm=(1/2)(\mathcal{Z}_+\pm\mathcal{Z}_-)$, we see that the binormal structures coincide with the generalized coK\"ahler structures of \cite{GT}.}\end{rem}
\begin{prop}\label{propbinorm} The structure $(\mathcal{F},\mathcal{Z}_\pm,G)$ is a binormal structure iff the generalized almost complex structures $\mathcal{J},\mathcal{J}'$ produced by $\mathcal{F},\mathcal{F}'$ on $M\times\mathds{R}$ define a generalized K\"ahler structure.\end{prop}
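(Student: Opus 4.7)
The plan is to exhibit an explicit generalized Riemannian metric $\tilde{G}$ on $M\times\mathds{R}$ that makes $(\tilde{G},\mathcal{J},\mathcal{J}')$ a generalized almost Hermitian pair, and then to reduce the generalized K\"ahler condition to the integrability of both $\mathcal{J}$ and $\mathcal{J}'$, which by Definition \ref{normalgen} is precisely binormality.

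First, I would define $\tilde{G}\Leftrightarrow(\tilde\gamma,\tilde\psi)$ on $M\times\mathds{R}$ by $\tilde\gamma=\gamma+dt^{2}$ and $\tilde\psi$ equal to $\psi$ extended trivially (so that $i(\partial/\partial t)\tilde\psi=0$). A direct application of (\ref{exprEpm}) to $\tilde{G}$ identifies its $\pm 1$-eigenbundles as
$$
\tilde V_\pm=V_\pm\oplus\mathrm{span}\{\mathcal{T}_\pm\},
$$
so that $\tilde G=\pm g$ on $\tilde V_\pm$. By Proposition \ref{prop21metric}, metric compatibility (\ref{metriccomplfr}) forces $\mathcal{F}$ to preserve the $g$-orthogonal splitting $V_+\oplus V_-$, and therefore $\mathcal{F}'=\mathcal{G}\circ\mathcal{F}$ does too; combined with $\mathcal{J}\mathcal{T}_\pm\in\mathrm{span}\{\mathcal{Z}_\pm\}\subset V_\pm$ (from (\ref{JptFrond})), this shows that $\mathcal{J}$ preserves $\tilde V_\pm$, and the same computation carried out with the primed data yields the analogous statement for $\mathcal{J}'$.

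The invariance of $\tilde V_\pm$ under $\mathcal{J}$, together with its $g$-skew symmetry and $\tilde G=\pm g$ on $\tilde V_\pm$, automatically produces $\tilde G$-compatibility of $\mathcal{J}$; the same argument works for $\mathcal{J}'$. Next I would compute $\mathcal{J}$ and $\mathcal{J}'$ on each of the four summands $V_+$, $V_-$, $\mathrm{span}\{\mathcal{T}_+\}$, $\mathrm{span}\{\mathcal{T}_-\}$ using (\ref{JptFrond}) with the unprimed and primed data, respectively. On $\tilde V_+$ the signs conspire to give $\mathcal{J}'=\mathcal{J}$ (because $\mathcal{G}|_{V_+}=Id$ and $\mathcal{Z}'_+=\mathcal{Z}_+$), whereas on $\tilde V_-$ one finds $\mathcal{J}'=-\mathcal{J}$ (from $\mathcal{G}|_{V_-}=-Id$, which gives $\mathcal{F}'|_{V_-}=-\mathcal{F}|_{V_-}$, and from $\mathcal{Z}'_-=-\mathcal{Z}_-$). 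Thus $\mathcal{J}'=\tilde{\mathcal{G}}\circ\mathcal{J}$, which is exactly the complementary relation of generalized almost Hermitian pairs; in particular $\mathcal{J}\circ\mathcal{J}'=\mathcal{J}'\circ\mathcal{J}$.

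At this stage every algebraic ingredient of a generalized K\"ahler triple $(\tilde G,\mathcal{J},\mathcal{J}')$ on $M\times\mathds{R}$ is in place, so the K\"ahler condition reduces to the integrability of $\mathcal{J}$ and of $\mathcal{J}'$. By Definition \ref{normalgen} this is the simultaneous normality of $(\mathcal{F},\mathcal{Z}_\pm)$ and of $(\mathcal{F}',\mathcal{Z}'_\pm)$, that is, binormality. The only genuinely non-routine step is the third paragraph above: one must keep careful track of the double signs coming from $\mathcal{Z}'_\pm=\pm\mathcal{Z}_\pm$ and from $\mathcal{G}|_{V_\pm}=\pm Id$ in order to see that $\mathcal{J}'$ really is the $\tilde G$-complementary structure of $\mathcal{J}$, rather than merely some commuting, $\tilde G$-compatible generalized almost complex structure.
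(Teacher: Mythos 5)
Your proposal is correct and follows essentially the same route as the paper: both arguments reduce the statement to Gualtieri's characterization of generalized K\"ahler structures as a commuting pair of integrable generalized complex structures whose product $-\mathcal{J}\circ\mathcal{J}'$ is a positive definite generalized metric. The only difference is one of presentation --- the paper defines $\tilde{\mathcal{G}}=-\mathcal{J}\circ\mathcal{J}'$ and verifies positive definiteness on $L$, $S$ and $span\{\mathcal{T}_\pm\}$, whereas you posit $\tilde{G}\Leftrightarrow(\gamma+dt^2,\psi)$ with eigenbundles $\tilde{V}_\pm=V_\pm\oplus span\{\mathcal{T}_\pm\}$ (exactly the metric the paper itself identifies later, in its discussion of Sasakian structures) and verify $\mathcal{J}'=\tilde{\mathcal{G}}\circ\mathcal{J}$; these are equivalent computations.
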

\begin{proof} If $(\mathcal{J},\mathcal{J}')$ define a generalized K\"ahler structure, then, $\mathcal{J}$ and $\mathcal{J}'$ are integrable, hence, $(\mathcal{F},\mathcal{Z}_\pm,G)$ is a binormal structure.

Conversely, from the commutation of $\mathcal{F},\mathcal{F}'$ we deduce $im\,\mathcal{F}=im\,\mathcal{F}'=L$ and the restrictions of $\mathcal{J},\mathcal{J}'$ to this common image are $\mathcal{F},\mathcal{F}'$. Therefore, since $\mathcal{F},\mathcal{F}'$ commute, $\mathcal{J},\mathcal{J}'$commute on $L$. Then, the use of (\ref{fKrond}) for $\mathcal{Z}_\pm,\mathcal{Z}'_\pm$ shows that $\mathcal{J},\mathcal{J}'$ also commute on $im\,\mathcal{K}=im\,\mathcal{K}'$. Hence, $\mathcal{J},\mathcal{J}'$ commute on $\mathbf{T}(M\times\mathds{R})$. If $(\mathcal{F},\mathcal{Z}_\pm,G)$ is binormal, $\mathcal{J},\mathcal{J}'$ are integrable. Such a pair of generalized complex structures yields a generalized K\"ahler structure, provided that the metric $\tilde{G}$ defined on $M\times\mathds{R}$ by $\tilde{\mathcal{G}}=-\mathcal{J}\circ\mathcal{J}'$ is positive definite \cite{Gualt}. The last condition holds. Indeed, using the second relation (\ref{condptGrond}) to express $\tilde{G}$, we get $$\tilde{G}|_L=G|_L,\,\tilde{G}|_S=G|_S,\,\tilde{G}(\mathcal{T}_\pm,\mathcal{T}_\pm)=1,\, \tilde{G}(\mathcal{T}_+,\mathcal{T}_-)=0,$$
which justifies the conclusion.
\end{proof}
\begin{rem}\label{obsU} {\rm By Remarks \ref{dimens21} (1), \ref{obsparametru}, Proposition \ref{propbinorm} remains valid if we replace $\mathcal{T}_\pm=(\pm(\partial/\partial t),dt)$ by any $g$-pseudo-orthonormal basis $\mathcal{T}'_\pm$ of $\mathbf{T}\mathds{R}$ in the definition of $\mathcal{J},\mathcal{J}'$.}\end{rem}
\begin{prop}\label{binorningK} Let $M$ be a generalized K\"ahler manifold with the structure $(J_\pm,\gamma,\psi)$. Assume that there exists a unit Killing vector field $U$ on $(M,\gamma)$ that is holomorphic with respect to both structures $J_\pm$. Let $N$ be a hypersurface of $M$ that is orthogonal to $U$. Then, the induced generalized metric almost contact structure of $N$ is a binormal structure.\end{prop}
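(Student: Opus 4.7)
The argument breaks into three stages: obtain classical normality via Proposition~\ref{prop1Killing}, reduce binormality to a generalized K\"ahler identity on $N\times\mathds{R}$ via Proposition~\ref{propbinorm}, and verify that identity by a local flow argument.

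Since $(M,G,\mathcal{J})$ is generalized K\"ahler, each pair $(\gamma,J_\pm)$ is a classical Hermitian structure, and by hypothesis $U$ is a unit Killing vector field that is holomorphic with respect to both $J_+$ and $J_-$. Applying Proposition~\ref{prop1Killing} to each Hermitian manifold $(M,\gamma,J_\pm)$ separately, the induced classical metric almost contact structures $(F_\pm,Z_\pm,\xi_\pm,s)$ on $N$ are both normal, so the classical almost complex structures $\tilde J_\pm$ on $N\times\mathds{R}$ defined from $(F_\pm,Z_\pm,\xi_\pm)$ via \eqref{JF} are both integrable.

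By Proposition~\ref{propbinorm}, binormality of the induced $(2,1)$-generalized almost contact structure is equivalent to $(\mathcal{J},\mathcal{J}')$ on $N\times\mathds{R}$ being a generalized K\"ahler structure with respect to the metric $\tilde G$ arising there; the computation in the proof of that proposition shows that $\tilde G$ is positive definite and corresponds to the product data $(\tilde\gamma,\tilde\psi)=(s+dt^2,\pi^*\kappa)$ with $\pi:N\times\mathds{R}\to N$. Through the standard equivalence between generalized K\"ahler data and the quadruple $(\gamma,\psi,J_+,J_-)$, the task reduces to verifying identity \eqref{relpsiJ} for $(\tilde\gamma,\tilde\psi,\tilde J_+,\tilde J_-)$ on $N\times\mathds{R}$.

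To do this, replicate the local flow construction of Proposition~\ref{prop1Killing}: around each $x\in N$ choose coordinates $(x^u,t)$ with $U|_{\mathcal{V}_x}=\partial/\partial t$ and $N\cap\mathcal{V}_x=\{t=0\}$, so that $\mathcal{V}_x\approx\mathcal{U}_x\times(-\epsilon_x,\epsilon_x)$ and $\flat_\gamma U=dt$. The standard observation that a hypersurface orthogonal to a unit Killing vector field is totally geodesic, combined with $L_U\gamma=0$, yields $\nabla^\gamma U=0$ on all of $\mathcal{V}_x$. The assumptions $L_UJ_\pm=L_U\gamma=0$ then identify $(J_\pm,\gamma)$ on $\mathcal{V}_x$ with $(\tilde J_\pm,\tilde\gamma)$ on the product, exactly as in Proposition~\ref{prop1Killing}, and hence match their fundamental forms $\Omega_\pm$ and their exterior derivatives. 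Substituting these identifications into \eqref{relpsiJ} on $M$ and using $\nabla^\gamma_UJ_\pm=0$ (a consequence of $\nabla^\gamma U=0$ and $L_UJ_\pm=0$) produces the required identity on $N\times\mathds{R}$.

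\textbf{The main obstacle} is that $d\psi$ on $\mathcal{V}_x$ may disagree with $d\tilde\psi=\pi^*d\kappa$ on triples involving $\partial/\partial t$, although they agree on triples tangent to $N$ by $\iota^*d\psi=d\kappa$. Using $\nabla^\gamma U=0$ and \eqref{relpsiJ} applied with one argument equal to $U$ forces strong anti-invariance and partial vanishing of $i_Ud\psi$ under $F_\pm$ (via $Z_\pm=-J_\pm U$); the bookkeeping that shows every such extra term is either absorbed into the $N$-intrinsic data or cancels against the LHS, in the case analysis over $(X,Y,W)\in\{TN,\,\partial/\partial t\}^3$, is the technical heart of the argument.
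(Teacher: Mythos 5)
Your overall strategy differs from the paper's. The paper observes that, by Definition~\ref{binormal}, binormality is nothing more than the integrability of the two generalized almost complex structures $\tilde{\mathcal{J}},\tilde{\mathcal{J}}'$ on $N\times\mathds{R}$; since the normality conditions \eqref{normaltotal} are local, it repeats the local construction of Proposition~\ref{prop1Killing} to identify $\tilde{\mathcal{J}},\tilde{\mathcal{J}}'$ over $\mathcal{U}_x\times(-\epsilon_x,\epsilon_x)$ with the restrictions of the integrable structures $\mathcal{J},\mathcal{J}'$ of $M$, and transfers integrability directly --- no metric identity such as \eqref{relpsiJ} is ever checked. You instead invoke Proposition~\ref{propbinorm} to reduce binormality to the full generalized K\"ahler condition for $(\tilde\gamma,\tilde\psi,\tilde J_\pm)$ on $N\times\mathds{R}$ and then set out to verify \eqref{relpsiJ}. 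That reduction is legitimate (though roundabout), and your first two stages, including the observation that $N$ is totally geodesic and $\nabla^\gamma U=0$ on $\mathcal{V}_x$ (most cleanly because $\flat_\gamma U=dt$ is closed), are sound.

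The gap is in the third stage, and you have named it without closing it. The identification of $(J_\pm,\gamma)$ on $\mathcal{V}_x$ with the product data does not identify $\psi|_{\mathcal{V}_x}$ with $\pi^*\kappa$: neither $i_Ud\psi$ nor $L_U\psi$ has any reason to vanish, so $d\psi$ and $d\tilde\psi$ genuinely differ on triples containing $\partial/\partial t$. What one actually extracts from \eqref{relpsiJ} with one argument equal to $U$ (using $\nabla^\gamma_UJ_\pm=0$, which follows from $\nabla^\gamma U=0$ and $L_UJ_\pm=0$) is only the invariance $d\psi(U,J_\pm Y,J_\pm W)=d\psi(U,Y,W)$, not the vanishing of $i_Ud\psi$; so the ``extra terms'' are in general nonzero, and the claimed absorption or cancellation over the case analysis $(X,Y,W)\in\{TN,\partial/\partial t\}^3$ is precisely the content that must be proved. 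As written, the argument stops where the real work begins. Either carry out that case analysis explicitly, or adopt the paper's shorter route and prove integrability of $\tilde{\mathcal{J}},\tilde{\mathcal{J}}'$ directly from their local coincidence with $\mathcal{J},\mathcal{J}'$ (in which case the role of $\psi$ versus $\pi^*\kappa$ in the splitting $V_\pm$ still deserves an explicit word, but no analogue of \eqref{relpsiJ} needs to be computed).
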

\begin{proof}
We denote by $\mathcal{J},\mathcal{J}'$ the integrable generalized complex structures of the given generalized K\"ahler structure of $M$. By Definition \ref{binormal}, we have to show that the generalized almost complex structures, say $\tilde{\mathcal{J}},\tilde{\mathcal{J}}'$ associated to the induced structures $\mathcal{F},\mathcal{F}'$ of $N$ are integrable.
Since the normality conditions (\ref{normaltotal}) are local conditions on $N$, we shall proceed as in the proof of Proposition \ref{prop1Killing}, from where we recall the existence of neighborhoods $\mathcal{V}_x,\mathcal{U}_x$, $\forall x\in N$, such that $\mathcal{V}_x=\mathcal{U}_x\times(-\epsilon_x,\epsilon_x)$ ($0<\epsilon_x\in\mathds{R}$) and
\begin{equation}\label{JpminbnormK2}J_{\pm}=F_{\pm}-dt\otimes Z_\pm+\xi_\pm\otimes\frac{\partial}{\partial t}\end{equation} holds on $\mathcal{V}_x$.

Furthermore, as in the proof of Proposition \ref{prop1Killing} and under the present hypotheses, the expression (\ref{JpminbnormK2}) implies that $\mathcal{J}|_{\mathcal{V}_x}= \tilde{\mathcal{J}}|_{\mathcal{V}_x},\,\mathcal{J}'|_{\mathcal{V}_x} =\tilde{\mathcal{J}}'|_{\mathcal{V}_x}$ and the integrability of $\mathcal{J}|_{\mathcal{V}_x},\mathcal{J}'|_{\mathcal{V}_x}$ yields the required integrability of
$\tilde{\mathcal{J}}|_{\mathcal{V}_x},\tilde{\mathcal{J}}'|_{\mathcal{V}_x}$.
\end{proof}
\begin{corol}\label{corolKilling}  Let $M$ be a compact generalized K\"ahler manifold with a closed $2$-form $\psi$. Assume that there exists a unit Killing vector field $U$ on $(M,\gamma)$. Let $N$ be a hypersurface of $M$ that is orthogonal to $U$. Then, the induced generalized metric almost contact structure of $N$ is a binormal structure.\end{corol}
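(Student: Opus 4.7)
The plan is to deduce the corollary directly from Proposition \ref{binorningK}. That proposition already delivers the binormality of the induced structure on a hypersurface orthogonal to a unit Killing vector field $U$, provided that $U$ is holomorphic with respect to \emph{both} classical almost complex structures $J_+$ and $J_-$ attached to the generalized K\"ahler structure. Consequently, under the hypotheses of the corollary, the only new ingredient one must supply is this simultaneous holomorphicity of $U$.

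To establish it, I would invoke Remark \ref{partialK}: because $\psi$ is closed, condition (\ref{relpsiJ}) forces $\nabla^\gamma J_\pm = 0$, so $(\gamma,J_+)$ and $(\gamma,J_-)$ are genuine K\"ahler structures on the compact manifold $M$. Next I would apply the classical result, recalled in the paragraph following Proposition \ref{prop1Killing}, that on a compact K\"ahler manifold every Killing vector field is automatically holomorphic \cite{Mor}. Using it once with respect to $(\gamma,J_+)$ and once with respect to $(\gamma,J_-)$ yields that the given unit Killing field $U$ is simultaneously $J_+$- and $J_-$-holomorphic.

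With $U$ now a unit Killing vector field that is holomorphic with respect to both $J_\pm$, Proposition \ref{binorningK} applies verbatim and gives that the induced generalized metric almost contact structure on $N$ is binormal. There is no real obstacle: the argument is essentially a bookkeeping exercise combining Remark \ref{partialK} (which converts the closedness of $\psi$ into bi-K\"ahlerity) with the compact-K\"ahler/Killing-implies-holomorphic fact (which converts ``Killing'' into ``bi-holomorphic''), so as to meet the hypotheses of Proposition \ref{binorningK}. The roles of the two non-local assumptions are thus transparent: compactness is used only in the Killing-implies-holomorphic step, while the closedness of $\psi$ is used only to pass to the bi-K\"ahler regime.
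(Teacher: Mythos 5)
Your argument is correct and coincides with the paper's own proof: closedness of $\psi$ together with (\ref{relpsiJ}) makes $(\gamma,J_\pm)$ K\"ahler, compactness then makes the Killing field $U$ holomorphic for both $J_\pm$ by \cite{Mor}, and Proposition \ref{binorningK} concludes. No gaps.
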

\begin{proof} In the indicated case, $(\gamma,J_\pm)$ are K\"ahler structures and, since $M$ is compact, the Killing field $U$ is holomorphic with respect to both $J_\pm$ \cite{Mor}.\end{proof}

Finally, we also give the expression of the normality and binormality properties by means of the pair of classical structures that is equivalent to the given $(2,1)$-generalized metric almost contact structure.
\begin{prop}\label{metricnormexplicit}	Let $(\mathcal{F},\mathcal{Z}_\pm,G)$ be a metric $(2,1)$-generalized almost contact structure and let $(F_\pm,Z_\pm,\xi_\pm,\gamma,\psi)$ ($G\Leftrightarrow(\gamma,\psi)$) be the equivalent pair of classical structures. Then, the given structure is normal iff the classical metric almost contact structures $(F_\pm,Z_\pm,\xi_\pm,\gamma)$ are normal and the following conditions hold:
\begin{equation}\label{indbin0} \begin{array}{l} [Z_+,Z_-]=0,\; L_{Z_-}\xi_++L_{Z_+}\xi_- -d(\gamma(Z_+,Z_-))=i(Z_-)i(Z_+)d\psi, \vspace*{2mm}\\
\zeta_\pm(X_\pm)\circ F_+=\zeta_\pm(X_\pm)\circ F_-=-\zeta_\pm(F_\pm X_\pm),

\vspace*{2mm}\\	 F_\pm[Z_\pm,X_\mp]-[Z_\pm,F_\mp X_\mp] =\frac{1}{2}(F_-\sharp_\gamma\varrho_\pm(X_\mp)-F_+\sharp_\gamma\varrho_\pm(X_\mp)),

\vspace*{2mm}\\	 \varrho_\pm(F_\mp X_\mp)+\varrho_\pm(X_\mp)\circ F_\mp=0,\end{array}\end{equation}
where $X_\pm\in P_\pm=im\,F_\pm$ and $\zeta_\pm(X_\pm),\varrho_\pm(X_\mp)\in\Omega^1(M)$ are defined by
\begin{equation}\label{zetarho} \begin{array}{lcl} \zeta_\pm(X_\pm)&=&i(X_\pm)i(Z_\pm)d\psi\pm(L_{Z_\pm}(\flat_\gamma X_\pm)-\flat_{L_{X_\pm}\gamma}Z_\pm), \vspace*{2mm}\\ \varrho_\pm(X_\mp)&=&\mp\{\flat_{\gamma}[Z_\pm,X_\mp]-i(X_\mp)i(Z_\pm)d\psi\vspace*{2mm}\\ & +&L_{Z_\pm}(\flat_\gamma X_\mp)+i(X_\mp)d\xi_\pm\}. \end{array}\end{equation}
\end{prop}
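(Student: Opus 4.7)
The plan is to reduce to Proposition \ref{normalFrond} and then to translate each of its three conditions (\ref{normaltotal}) into classical data by testing on lifts adapted to the $G$-splitting $\mathbf{T}M = V_+ \oplus V_-$. Since $\mathcal{Z}_\pm \in V_\pm$, $L\cap V_\pm = \{(X, \flat_{\psi\pm\gamma}X) : X \in P_\pm\}$, and $S = \mathrm{span}\{\mathcal{Z}_+, \mathcal{Z}_-\}$, each condition can be tested separately on the pure sectors $V_\pm$ and on the mixed sector. The forms $\zeta_\pm$ and $\varrho_\pm$ defined by (\ref{zetarho}) will emerge as exactly the pieces of the cotangent parts of the relevant Courant brackets that survive after the $TM$-components are recognized.

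For the first bracket, expanding $[\mathcal{Z}_+, \mathcal{Z}_-]$ with $\flat_{\psi\pm\gamma}Z_\pm = i(Z_\pm)\psi \pm \xi_\pm$ and the Courant bracket formula from Section 2 gives $[Z_+, Z_-] = 0$ on the $TM$-side; on the cotangent side, Cartan's formula $L_X \psi = d\,i(X)\psi + i(X) d\psi$ together with $\gamma(Z_+, Z_-) = \xi_+(Z_-) = \xi_-(Z_+)$ collapses the resulting $1$-form to the stated identity. For the commutation condition $[\mathcal{Z}_\pm, \mathcal{F}\mathcal{X}] = \mathcal{F}[\mathcal{Z}_\pm, \mathcal{X}]$ I distinguish the pure case $\mathcal{X} \in L\cap V_\pm$ and the mixed case $\mathcal{X} \in L\cap V_\mp$. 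In the pure case the $TM$-part yields $[Z_\pm, F_\pm X_\pm] = F_\pm [Z_\pm, X_\pm]$, while the cotangent part, after using $\flat_{\psi\pm\gamma}F_\pm X_\pm = i(F_\pm X_\pm)\psi \pm \flat_\gamma F_\pm X_\pm$ and Cartan's formula, organizes into $\zeta_\pm(X_\pm) \circ F_\pm = -\zeta_\pm(F_\pm X_\pm)$; demanding that the cotangent component of $[\mathcal{Z}_\pm, (X_\pm, \flat_{\psi\pm\gamma}X_\pm)]$ split correctly across $V_+ \oplus V_-$ then forces the companion equality $\zeta_\pm(X_\pm) \circ F_\mp = \zeta_\pm(X_\pm) \circ F_\pm$. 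In the mixed case the $TM$-part of $[\mathcal{Z}_\pm, \mathcal{X}]$ is not automatically in $V_\pm$ or $V_\mp$; decomposing it via $\tau_\pm^{-1}$ introduces $\varrho_\pm(X_\mp)$, whose sharp difference $(F_+ - F_-)\sharp_\gamma\varrho_\pm(X_\mp)$ accounts for the splitting and yields the third equation of (\ref{indbin0}), while cotangent compatibility with $\mathcal{F}$ gives the fourth.

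The Nijenhuis condition $\mathcal{N}_{\mathcal{F}}(\mathcal{X}, \mathcal{Y}) = pr_S[\mathcal{X}, \mathcal{Y}]$ on $L$ is then tested with both arguments in $V_+$, both in $V_-$, and one in each. In each pure case, $pr_S|_{V_\pm}$ corresponds under $\tau_\pm$ to $\mathrm{Id} + F_\pm^2 = \xi_\pm \otimes Z_\pm$ on $TM$, so the condition reduces to $\mathcal{N}_{F_\pm}(X, Y) = \xi_\pm([X, Y]) Z_\pm$; combined with the pure commutation relation already extracted and Remark \ref{obsctnormal}, this amounts precisely to classical normality of $(F_\pm, Z_\pm, \xi_\pm, \gamma)$. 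The mixed Nijenhuis case rearranges into a consequence of the $\zeta_\pm$ and $\varrho_\pm$ equations, so no new classical condition arises, and the converse direction is obtained by assembling the pieces back into (\ref{normaltotal}).

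The main obstacle I expect is the bookkeeping in the mixed sector: the cotangent parts of $[\mathcal{Z}_\pm, (X_\mp, \flat_{\psi\mp\gamma}X_\mp)]$ mix $\flat_\gamma$ and $\flat_\psi$ contributions that do not align with either $V_+$ or $V_-$, and isolating precisely the combinations $\zeta_\pm, \varrho_\pm$ of (\ref{zetarho}) requires several careful applications of Cartan's formula and of $L_X(\flat_\gamma Y) = \flat_{L_X\gamma}Y + \flat_\gamma [X, Y]$, with scrupulous attention to the $\pm/\mp$ sign convention fixed at the end of the Introduction.
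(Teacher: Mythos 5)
Your overall architecture coincides with the paper's: reduce to Proposition \ref{normalFrond}, expand the three conditions (\ref{normaltotal}) using the Courant brackets of $V_\pm$-sections, and split into pure and mixed sectors, with $\zeta_\pm$ and $\varrho_\pm$ emerging as the residual covector pieces. Your treatment of the first condition and of the commutation condition $[\mathcal{Z}_\pm,\mathcal{F}\mathcal{X}]=\mathcal{F}[\mathcal{Z}_\pm,\mathcal{X}]$ --- the pure case giving $L_{Z_\pm}F_\pm=0$ together with the $\zeta_\pm$ identities via the decomposition of a pure covector $(0,\alpha)$ into its $V_+\oplus V_-$ parts, the mixed case giving the $\varrho_\pm$ identities --- is exactly the paper's argument.

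The gap is in your handling of the third condition, $\mathcal{N}_{\mathcal{F}}(\mathcal{X},\mathcal{Y})=pr_S[\mathcal{X},\mathcal{Y}]$ on $L$. You assert that with both arguments in $L\cap V_\pm$ this reduces to $\mathcal{N}_{F_\pm}(X,Y)=\xi_\pm([X,Y])Z_\pm$ and that the mixed case is a consequence of the $\zeta_\pm,\varrho_\pm$ equations. But by the very formulas (\ref{CrVpm}) you use elsewhere, each Courant bracket of two $V_+$-sections equals $\tau_+^{-1}$ of the Lie bracket plus a pure covector term $(0,c)$, with $c$ built from $d\psi$, $L_X(\flat_\gamma Y)$ and $\flat_{L_Y\gamma}X$; applying $\mathcal{F}$ to such a term via (\ref{descaux}) brings in both $F_+$ and $F_-$. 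Hence the covector component of $\mathcal{N}_{\mathcal{F}}(\mathcal{X},\mathcal{Y})-pr_S[\mathcal{X},\mathcal{Y}]$ is not simply $\flat_{\psi\pm\gamma}$ of the classical expression, and establishing that these extra terms impose nothing beyond the classical CRF property of $F_\pm$ is precisely the nontrivial step the paper delegates to formulas (4.38)-(4.44) of the proof of Proposition 4.3 of \cite{V1}. This matters in both directions of the equivalence: in the direct one you must check that no further $\psi$-dependent conditions are being suppressed, and in the converse you must verify that classical normality together with (\ref{indbin0}) really forces the full generalized condition, covector components included. Either reproduce that computation or invoke the cited result; as written, this step is asserted rather than proved. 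The remaining logic --- combining the CR type obtained from the Nijenhuis condition with $L_{Z_\pm}F_\pm=0$ and Remark \ref{obsctnormal} to conclude classical normality --- is correct and matches the paper.
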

\begin{proof}
Straightforward computations yield the following Courant brackets (e.g., \cite{V4}):
\begin{equation}\label{CrVpm} \begin{array}{l}
[(X,\flat_{\psi\pm\gamma} X),(Y,\flat_{\psi\pm\gamma} Y)]= ([X,Y],
\flat_{\psi\pm\gamma} [X,Y])
\vspace*{2mm}\\

\hspace*{5mm} +(0,i(Y)i(X)d\psi \pm(L_X(\flat_\gamma Y) -\flat_{L_Y\gamma}X),
\vspace*{2mm}\\

[(X,\flat_{\psi+\gamma} X),(Y,\flat_{\psi-\gamma} Y)]= ([X,Y],
\flat_{\psi} [X,Y])\vspace*{2mm}\\

\hspace*{5mm} +(0,i(Y)i(X)d\psi -L_X(\flat_\gamma Y) -L_Y(\flat_\gamma X)+d(\gamma(X,Y))).\end{array}\end{equation}

We shall use these formulas in the normality conditions of the structure $\mathcal{F}$. If we apply the second formula (\ref{CrVpm}), we see that the first condition (\ref{normaltotal}) is equivalent to the two conditions on the first line of (\ref{indbin0}).

In the second condition (\ref{normaltotal}) we have $\mathcal{X}\in L$ and it is obvious that $L=\tau_+^{-1}(P_+)\oplus \tau_-^{-1}(P_-)$. The pairs that belong to the terms of this direct sum are of the form $\mathcal{X}=(X_\pm,\flat_{\psi\pm\gamma}X_\pm)$, where $X_\pm\in P_\pm$.

Now, firstly, we consider the case of the pairs $\mathcal{Z}_\pm=(Z_\pm,\flat_{\psi\pm \gamma}Z_\pm),\mathcal{X}=(X_\pm,\flat_{\psi\pm \gamma}X_\pm)$.
The first formula (\ref{CrVpm}) gives two conditions that are equivalent to $[\mathcal{Z}_\pm,\mathcal{F}\mathcal{X}]=\mathcal{F}[\mathcal{Z}_\pm,\mathcal{X}]$. The first is $L_{Z_\pm}F_\pm=0$ and it is contained in the classical normality of $(F_\pm,Z_\pm,\xi_\pm)$. The other one is
\begin{equation}\label{normfinal2} \mathcal{F}(0,\zeta_\pm(X_\pm))=(0,\zeta_\pm(F_\pm X_\pm)),\end{equation}
where $\zeta_\pm(X_\pm)$ are defined by (\ref{zetarho}).

Condition (\ref{normfinal2}) may be again decomposed into two components. Indeed, for any $1$-form $\alpha$ one has the decomposition
\begin{equation}\label{descaux}
(0,\alpha)=\frac{1}{2}\{(\sharp_\gamma\alpha,\flat_{\psi+\gamma}\sharp_\gamma\alpha)-
(\sharp_\gamma\alpha,\flat_{\psi-\gamma}\sharp_\gamma\alpha)\},\end{equation}
which allows to express $\mathcal{F}(0,\alpha)$ via $F_\pm$. Accordingly (and using the skew symmetry of $F_\pm$ with respect to $\gamma$, which is equivalent to $F_\pm\sharp_\gamma\alpha=-\sharp_\gamma(\alpha\circ F_\pm)$), we see that the second condition (\ref{normfinal2}) is equivalent to the conditions on the second line of (\ref{indbin0}).
These conditions are invariant under a change $X\mapsto (fX)$ ($f\in C^\infty(M)$); it is easy to check that $\zeta_\pm(X_\pm)$ ($X_\pm\in P_\pm$) is $C^\infty(M)$-linear.

Secondly, we have to consider the condition
\begin{equation}\label{lastc} \begin{array}{l}
[(Z_\pm,\flat_{\psi\pm\gamma}Z_\pm),(F_\mp X_\mp,\flat_{\psi\mp\gamma} (F_\mp X_\mp)]\vspace*{2mm}\\ = \mathcal{F}[(Z_\pm,\flat_{\psi\pm\gamma}Z_\pm),(X_\mp,\flat_{\psi\mp\gamma}X_\mp)].\end{array}
\end{equation}
Using the second formula (\ref{CrVpm}) and (\ref{descaux}), we get
\begin{equation}\label{auxptfinal} \begin{array}{l}[(Z_\pm,\flat_{\psi\pm\gamma}Z_\pm), (X_\mp,\flat_{\psi\mp\gamma}X_\mp)]= \pm\{([Z_\pm,X_\mp],\flat_{\psi\pm\gamma}[Z_\pm,X_\mp])\vspace*{2mm}\\ +\frac{1}{2}[(\sharp_\gamma\varrho_\pm(X_\mp),\flat_{\psi+\gamma}\sharp_\gamma\varrho_\pm(X_\mp))-
(\sharp_\gamma\varrho_\pm(X_\mp),\flat_{\psi-\gamma}\sharp_\gamma\varrho_\pm(X_\mp))]\},\end{array}
\end{equation}
where $\varrho_\pm(X)$ are defined by (\ref{zetarho}).

Now, the left hand side of (\ref{lastc}) is obtained by replacing $X\mapsto F_\mp X_\mp$ in (\ref{auxptfinal}) and, on the other hand, we can express $\mathcal{F}$ in the right hand side of (\ref{lastc}) via $F_\pm$ and (\ref{eqJrond}).
This leads to the expression of the vector and covector components of (\ref{lastc}). For the vector component we get exactly the third line of (\ref{indbin0}). For the covector component we get
\begin{equation}\label{normfinal40} \begin{array}{l}
\flat_{\psi\pm\gamma}(F_\pm[Z_\pm,X_\mp]-[Z_\pm,F_\mp X_\mp])\vspace*{2mm}\\
=\frac{1}{2}(\flat_{\psi-\gamma}F_-\sharp_\gamma\varrho_\pm(X_\mp) -\flat_{\psi+\gamma}F_+\sharp_\gamma\varrho_\pm(X_\mp)+2\varrho_\pm(F\mp X_\mp)).\end{array}
\end{equation}
If we insert the value given by the third line of (\ref{indbin0}) in (\ref{normfinal40}) and look at the upper signs and lower signs separately, we see that (\ref{normfinal40}) may be replaced by the simpler relation that appears in the last line of (\ref{indbin0}).

Finally, let us recall that the last condition (\ref{normaltotal}) is equivalent to the fact that $\mathcal{F}$ is a generalized CRF structure. On the other hand, the first part (formulas (4.38)-(4.44)) of the proof of Proposition 4.3 of \cite{V1} proves that $\mathcal{F}$ is CRF iff the structures $F_\pm$ are classical CRF structures. In our case, we also had
$L_{Z_\pm}F_\pm=0$ (the first condition (\ref{normfinal2})). Hence, generalized normality of $\mathcal{F}$ implies classical normality of $(F_\pm,Z_\pm,\xi_\pm)$ and the latter is part of the conditions of the former. The conclusion of all the above is that, besides the classical normality of $(F_\pm,Z_\pm,\xi_\pm)$, generalized normality exactly requires conditions (\ref{indbin0})).
\end{proof}
\begin{prop}\label{binormexpli} With the same notation as in Proposition \ref{metricnormexplicit}, the structure $(\mathcal{F},\mathcal{Z}_\pm,G)$ is binormal iff
the classical metric almost contact structures $(F_\pm,Z_\pm,\xi_\pm,\gamma)$ are normal and the following conditions hold:
\begin{equation}\label{indbin1} \begin{array}{l} [Z_+,Z_-]=0,\; L_{Z_-}\xi_++L_{Z_+}\xi_-
-d(\gamma(Z_+,Z_-))=i(Z_-)i(Z_+)d\psi, \vspace*{2mm}\\
\zeta_\pm(X_\pm)=0,\; \varrho_\pm(F_\mp X_\mp)+\varrho_\pm(X_\mp)\circ F_\mp=0,\vspace*{2mm}\\ F_\pm[Z_\pm,X_\mp]=\mp\frac{1}{2} F_\pm(\sharp_\gamma\varrho_\pm(X_\mp)),\,[Z_\pm,F_\mp X_\mp] =\mp\frac{1}{2}F_\mp(\sharp_\gamma\varrho_\pm(X_\mp)). \end{array} \end{equation}
\end{prop}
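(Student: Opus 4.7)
The plan is to apply Proposition \ref{metricnormexplicit} twice, once to $\mathcal{F}$ and once to its associated second structure $\mathcal{F}'$, and then combine. By Definition \ref{binormal}, binormality is the conjunction of the two normalities. The structure $\mathcal{F}'$ corresponds to the classical pair $(F_+, -F_-)$ with the same $(\gamma,\psi)$ and the complementary frame $\mathcal{Z}'_+=\mathcal{Z}_+$, $\mathcal{Z}'_-=-\mathcal{Z}_-$, i.e.\ to the classical data $(F_+,Z_+,\xi_+,\gamma)$ and $(-F_-,-Z_-,-\xi_-,\gamma)$. Since the Nijenhuis tensor is quadratic in $F_-$ and $d(-\xi_-)\otimes(-Z_-)=d\xi_-\otimes Z_-$, classical normality is invariant under this involution; hence the classical normality halves of Proposition \ref{metricnormexplicit} for $\mathcal{F}$ and $\mathcal{F}'$ coincide and simply say that $(F_\pm,Z_\pm,\xi_\pm,\gamma)$ are classically normal.

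Next I would determine how the auxiliary $1$-forms transform under the involution. From the definitions (\ref{zetarho}), direct substitution gives $\zeta'_+=\zeta_+$, $\zeta'_-=-\zeta_-$, $\varrho'_+=\varrho_+$, $\varrho'_-=-\varrho_-$. With this transformation law I would compare the four lines of (\ref{indbin0}) as written for $\mathcal{F}$ with their counterparts for $\mathcal{F}'$. The first line and the fourth line come out identically under both structures (after the simultaneous sign reversal of $\varrho_-$ and $F_-$ in line four), and they become, respectively, the first line and the $\varrho$-clause on the second line of (\ref{indbin1}).

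The two nontrivial combinations are the second and third lines of (\ref{indbin0}). For the second line, the $\mathcal{F}'$-version rearranges to $\zeta_\pm(X_\pm)\circ F_+=-\zeta_\pm(X_\pm)\circ F_-=-\zeta_\pm(F_\pm X_\pm)$; adding this to the $\mathcal{F}$-version forces $\zeta_\pm(X_\pm)\circ F_\pm=0$ and $\zeta_\pm(F_\pm X_\pm)=0$ as $1$-forms. The essential algebraic upgrade is then that $F_\pm^3+F_\pm=0$ gives $F_\pm^2=-\mathrm{Id}$ on $P_\pm=\mathrm{im}\,F_\pm$, so $F_\pm|_{P_\pm}$ is invertible; every $Y_\pm\in P_\pm$ equals $F_\pm X_\pm$ for some $X_\pm\in P_\pm$, and the pointwise vanishing $\zeta_\pm(F_\pm X_\pm)=0$ propagates to $\zeta_\pm(X_\pm)=0$ on all of $P_\pm$, as required by (\ref{indbin1}). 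For the third line, adding and subtracting the $\mathcal{F}$ and $\mathcal{F}'$ equations (after substituting $Z'_-=-Z_-$, $F'_-=-F_-$) decouples the combined relation in (\ref{indbin0}) into the two separate equalities of the third line of (\ref{indbin1}).

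The converse direction is immediate: given (\ref{indbin1}), the vanishing $\zeta_\pm=0$ makes the second line of (\ref{indbin0}) trivial, the sum and difference of the two third-line equalities in (\ref{indbin1}) reproduce the third line of (\ref{indbin0}) for both signs of $F_-$, and the first and fourth lines are already in the shared form; this verifies Proposition \ref{metricnormexplicit} simultaneously for $\mathcal{F}$ and for $\mathcal{F}'$. The main obstacle will be the careful sign bookkeeping: tracking how the change $\mathcal{Z}_-\mapsto-\mathcal{Z}_-$ propagates through $Z_-,\xi_-,\zeta_-,\varrho_-$ and through the Courant-bracket expansions (\ref{CrVpm}) that enter Proposition \ref{metricnormexplicit}. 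Once that sign calculus is in hand, both the upgrade from $\zeta_\pm(X_\pm)\circ F_\pm=0$ to $\zeta_\pm(X_\pm)=0$ and the decoupling of the third-line relation are straightforward linear algebra.
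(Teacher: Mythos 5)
Your proposal is correct and follows essentially the same route as the paper: apply Proposition \ref{metricnormexplicit} to both $\mathcal{F}$ and $\mathcal{F}'$, track the sign changes $F_-\mapsto -F_-$, $Z_-\mapsto -Z_-$, $\zeta_-\mapsto-\zeta_-$, $\varrho_-\mapsto-\varrho_-$, and add/subtract the resulting conditions. The only (harmless) divergence is in upgrading to $\zeta_\pm(X_\pm)=0$: you use $\zeta_\pm(F_\pm X_\pm)=0$ together with the surjectivity of $F_\pm|_{P_\pm}$, whereas the paper uses $\zeta_\pm(X_\pm)\circ F_\pm=0$ plus an explicit computation of $\zeta_\pm(X_\pm)(Z_\pm)$ via classical normality --- both are valid (and the small sign slip in your rearranged $\mathcal{F}'$-version of the second line for the lower-sign case does not affect the conclusion).
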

\begin{proof}
The normality of the structure $\mathcal{F}'$ is characterized by (\ref{indbin0}) with the replacements $F_\pm\mapsto\pm F_\pm$, $Z_\pm\mapsto\pm Z_\pm$, $\xi_\pm\mapsto\pm\xi_\pm$, which imply $\zeta_\pm(X)\mapsto\pm \zeta_\pm(X),\varrho_\pm(X)\mapsto\pm\varrho_\pm(X)$. Classical normality and the condition on the first and last line of (\ref{indbin0}) are invariant under these changes.

The conditions on the second line of (\ref{indbin0}) hold for both $\mathcal{F}$ and $\mathcal{F}'$ iff
\begin{equation}\label{3forFF} \zeta_\pm(X_\pm)\circ F_+=0,\;\zeta_\pm(X_\pm)\circ F_-=0,\; \zeta_\pm(F_\pm X_\pm)=0. \end{equation}
The first two conditions (\ref{3forFF}) mean that $\zeta_\pm(X_\pm)|_{P_\pm}=0$. But, from the definition of $\zeta_\pm(X_\pm)$ we get
$$<\zeta_\pm(X_\pm),Z_\pm>=\pm2\gamma([X_\pm,Z_\pm],Z_\pm).$$
Since $(F_\pm,Z_\pm,\xi_\pm)$ are classically normal, we have $L_{Z_\pm}F_\pm=0$ and, in particular, the flow of $Z_\pm$ preserves the image $P_\pm$ of $F_\pm$. Thus, $[X_\pm,Z_\pm]\perp Z_\pm$ and $\zeta_\pm(X_\pm)(Z_\pm)=0$. Hence, (\ref{3forFF}) is equivalent to $\zeta_\pm(X_\pm)=0$, as required in the conclusion of the proposition.

Furthermore, for $\mathcal{F}'$ the condition on the third line of (\ref{indbin0}) becomes
\begin{equation}\label{normfinal4'}
F_\pm[Z_\pm,X]+[Z_\pm,F_\pm X]= \mp(\frac{1}{2}(F_-\sharp_\gamma\varrho_\pm(X) +F_+\sharp_\gamma\varrho_\pm(X)).\end{equation}
We shall add and subtract the third line of (\ref{indbin0}) and (\ref{normfinal4'}), and use the compatibility between $F_\pm$ and $\gamma$. This yields conditions that are equivalent to those on the last line of (\ref{indbin1}).
Notice that these last conditions are invariant under a change $X\mapsto fX$ ($f\in C^\infty(M)$)
because the definition of $\varrho_\pm(X_\pm)$ yields
$$\varrho_\pm(fX_\pm)=f\varrho_\pm(X_\mp) \mp2(Z_\pm f)\flat_\gamma X_\mp.$$
\end{proof}
\begin{example}\label{exbinorm} {\rm Assume that the structure $(\mathcal{F},\mathcal{Z}_\pm,G) \,\Leftrightarrow\,(F_\pm,Z_\pm,\xi_\pm,\gamma,\psi)$
satisfies the following conditions:
\begin{equation}\label{condlastex} \begin{array}{c}
[Z_+,Z_-]=0,\;\gamma(Z_+,Z_-)=const.,\;d\xi_\pm=0,\;\mathcal{N}_{F_\pm}=0,\vspace*{2mm}\\ L_{Z_\pm}F_{\mp}=0,\;L_{Z_\pm}\gamma=0,\;d\psi=0.\end{array}\end{equation}
Then, the structure is a binormal structure.

Indeed, conditions $d\xi_\pm=0,\mathcal{N}_{F_\pm}=0$ imply the normality of the two classical structures involved and the other conditions (\ref{condlastex}) imply $\gamma(Z_+,Z_-)=0$, $L_{Z_\pm}\xi_+=L_{Z_\pm}\xi_-=0$ and ensure the fulfilment of the conditions on the first line of (\ref{indbin1}). Then, modulo (\ref{condlastex}), easy calculations yield $<\zeta_\pm(X_\pm),Z_\pm>=0,<\zeta_\pm(X_\pm),Y_\pm>=0$, i.e., $\zeta_\pm(X_\pm)=0$. Finally, modulo (\ref{condlastex}), the expression (\ref{zetarho}) of $\varrho_\pm(X_\mp)$ reduces to $\varrho_\pm(X_\mp)=\mp2\flat_\gamma[Z_\pm,X_\mp]$ and the remaining conditions (\ref{indbin1}) follow from $L_{Z_\pm}F_{\mp}=0$.

Conditions (\ref{condlastex}) are satisfied for the $(2,1)$-generalized metric almost contact  structure defined by a classical normal metric almost contact structure $(F,Z,\xi)$ such that $d\xi=0$ (then, $\gamma(Z_+,Z_-)=\gamma(Z,Z)=1$).

Another particular case may be obtained as follows. Take $M=N\times(S^1\times S^1)$, where $N$ is a manifold endowed with a normal almost contact structure
$(A\in End(TN),U\in\chi(N),\vartheta\in \Omega^1(N))$ and a compatible Riemannian metric $\sigma$. Let $(t_+,t_-)$ be coordinates of period $1$ on the torus $S^1\times S^1$.
Define $Z_\pm=\partial/\partial t_\pm\in\chi(M)$, $\xi_\pm=dt_\pm\in\Omega^1(M)$, the metric $\gamma$ of $M$ given by
\begin{equation}\label{metricainex}
\gamma=\sigma+(dt_+)^2+(dt_-)^2
\end{equation}
and the endomorphisms $F_\pm\in End(TM)$ given by
\begin{equation}\label{FpmdinA} F_\pm=A+\vartheta\otimes Z_\mp -\xi_\mp\otimes U,
\end{equation}
equivalently,
\begin{equation}\label{Fpmdetal} \begin{array}{l} F_+|_{im\,A}=A,\,F_+U=Z_-,\,F_+Z_+=0,\,F_+Z_-=-U\vspace*{2mm}\\ F_-|_{im\,A}=A,\,F_-U=Z_+,\,F_-Z_+=-U,\,F_-Z_-=0.\end{array}\end{equation}

One can straightforwardly check that the structures $(F_\pm,Z_\pm,\xi_\pm,\gamma)$ are metric almost contact structures on $M$. These structures obviously satisfy the first three conditions (\ref{condlastex}) (in particular, $\gamma(Z_+,Z_-)=0$). By applying the Lie derivatives $L_{Z_\pm}$ to formulas (\ref{metricainex}), (\ref{FpmdinA}) we also get sixth and fifth condition (\ref{condlastex}) as well as the conditions $L_{Z_\pm}F_\pm=0$. The last condition (\ref{condlastex}) holds if we choose a closed $2$-form $\psi$.

The only remaining condition is $\mathcal{N}_{F_\pm}=0$ and its role is to ensure the normality of $(F_\pm,Z_\pm,\xi_\pm,\gamma)$. Since $L_{Z_\pm}F_\pm=0$, the required normality holds if the structures $(F_\pm,Z_\pm,\xi_\pm,\gamma)$ are of the CR type, equivalently, the Nijenhuis tensor $\mathcal{N}_{F_\pm}$ vanishes on arguments in $im\,F_\pm$, respectively.

From (\ref{Fpmdetal}), we get $im\,F_\pm=im\,A\oplus span\{U,Z_\mp\}$. $\mathcal{N}_{F_\pm}(U,Z_\mp)=0$ is trivial. If $X\in im\,A$, $\mathcal{N}_{F_\pm}(X,U)=0,\mathcal{N}_{F_\pm}(X,Z_\mp)=0$ follows from $L_UA=0$ (use the fact that the flow of $U$ preserves the image of $A$), which holds because of the normality of the structure $(A,U,\vartheta)$. The last required condition is $\mathcal{N}_{F_\pm}(X,Y)=0$, $\forall X,Y\in im\,A$, equivalently, $A$ is of the CR type. This fact is also implied by the normality of the structure $(A,U,\vartheta)$.

Therefore, if $N$ is a normal metric almost contact manifold, $N\times(S^1\times S^1)$ is a binormal metric $(2,1)$-generalized almost contact manifold.}\end{example}

More examples of binormal (coK\"ahler) structures were given in Section 5 of \cite{GT}. Moreover, the main result of \cite{GT} is that the product of two generalized metric contact manifolds with the naturally induced generalized almost Hermitian structure is a generalized K\"ahler manifold iff the factors are generalized coK\"ahler.

We shall finish by references to the subject of defining a natural notion of generalized Sasakian structure already discussed in \cite{{V1},{V3},{Sek}}.

Remember that a classical Sasakian structure is a metric almost contact structure $(F,Z,\xi,\gamma)$ such that the pair $(J,e^t(\gamma+dt^2))$, where $J$ is defined by (\ref{JF}), is a K\"ahler structure on $M\times\mathds{R}$. Now, consider a metric $(2,1)$-generalized almost contact structure $(\mathcal{F},\mathcal{Z}_\pm,G)$ on $M$. Then, as seen during the preparations for and proof of Proposition \ref{propbinorm}, we have the second structure $(\mathcal{F}',\mathcal{Z}'_\pm,G)$, the corresponding, commuting, generalized almost complex structures $\mathcal{J},\mathcal{J}'$ and the generalized Riemannian metric $\tilde{G}$ with $\tilde{\mathcal{G}}=-\mathcal{J\circ\mathcal{J}'}$ on $M\times\mathds{R}$.
Of course, $(\tilde{G},\mathcal{J})$ is a generalized almost Hermitian structure.
The last part in the proof of Proposition \ref{propbinorm} implies
$$
\tilde{G}=G+(\flat_g\mathcal{T}_+)\otimes(\flat_g\mathcal{T}_+) +(\flat_g\mathcal{T}_-)\otimes(\flat_g\mathcal{T}_-)$$
and shows that the corresponding $\pm1$-eigenbundles on $TM\times\mathds{R}$ are
\begin{equation}\label{tildeVpm}
\tilde{V}_\pm=V_\pm\oplus span\{\mathcal{T}_\pm\}.\end{equation}

As usually, we consider the $G$-equivalent pair $(\gamma,\psi)$, where $\gamma$ is a Riemannian metric on $M$ and $\psi\in\Omega^2(M)$. We recover the similar pair $(\tilde{\gamma},\tilde{\psi})$ of $\tilde{G}$ on$M\times\mathds{R}$ as follows. For each $\tilde{X}\in T(M\times\mathds{R})$, we find the unique $1$-form $\tilde{\alpha}_\pm \in T^*(M\times\mathds{R})$ such that $(\tilde{X},\tilde{\alpha}_\pm)\in \tilde{V}_\pm$. Then, we obviously have
$$\flat_{\tilde{\psi}}\tilde{X}=\frac{1}{2}(\tilde{\alpha}_++\tilde{\alpha}_-),\; \flat_{\tilde{\gamma}}=\frac{1}{2}(\tilde{\alpha}_+-\tilde{\alpha}_-).$$
Now, formula (\ref{tildeVpm}) shows that, for $\tilde{X}=X\in TM$, $\tilde{\alpha}_\pm=\alpha_\pm= \flat_{\psi\pm\gamma}X$ and, for $\tilde{X}=\partial/\partial t$, $\tilde{\alpha}_\pm=\pm dt$. Accordingly, we get
$$\tilde{\gamma}=\gamma+dt^2,\;\tilde{\psi}=\psi.$$

On the other hand, several authors have defined the notion of a conformal change $\mathcal{C}_\tau\in End(\mathbf{T}M)$, $\tau\in C^\infty(M)$, by $\mathcal{C}_\tau(X,\alpha)=(X,e^\tau\alpha)$ (e.g., \cite{VAnn}). Accordingly, on $M\times\mathds{R}$, we have the endomorphisms
$$\mathcal{J}_t=\mathcal{C}_{-t}\circ\mathcal{J}\circ\mathcal{C}_t,\,
\mathcal{J}'_t=\mathcal{C}_{-t}\circ\mathcal{J}'\circ\mathcal{C}_t,\,
\tilde{\mathcal{G}}_t=\mathcal{C}_{-t}\circ\tilde{\mathcal{G}}\circ\mathcal{C}_t,$$
which define a generalized almost Hermitian structure $(G_t,\mathcal{J}_t,\mathcal{J}'_t)$ such that $\tilde{\mathcal{G}}_t=-\mathcal{J}_t\circ\mathcal{J}'_t$.

These considerations show that it is natural to give the following definition.
\begin{defin}\label{Sasgen} {\rm A {\it $(2,1)$-generalized Sasakian structure} is a $(2,1)$-generalized metric almost contact structure $(\mathcal{F},\mathcal{Z}_\pm,G)$ on $M$ such that the corresponding structure $(G_t,\mathcal{J}_t,\mathcal{J}'_t)$ of $M\times\mathds{R}$ is a generalized K\"ahler structure, i.e., the structures $\mathcal{J}_t,\mathcal{J}'_t$ are integrable.}\end{defin}

The classical pair that corresponds to the generalized metric
$\tilde{\mathcal{G}}_t$ is \cite{VAnn}
$$\tilde{\gamma}_t=e^t\tilde{\gamma},\,\tilde{\psi}_t=e^t\tilde{\psi}.$$
Moreover, the pair of almost complex structures $\tilde{J}_{t\pm}$ of $M\times\mathds{R}$, which together with $\tilde{\gamma}_t,\tilde{\psi}_t$ define the generalized almost Hermitian structure $(\tilde{G}_t,\mathcal{J}_t)$, is just $\tilde{J}_{t\pm}=\tilde{J}_{\pm}$ \cite{VAnn}.
This shows that Definition \ref{Sasgen} yields the same generalized Sasakian structures as the definition of \cite{Sek}, while the definition in \cite{V3}, where $\tilde{\psi}$ is $\psi+\kappa\wedge dt$ for any $\kappa\in\Omega^1(M)$, is slightly more general. The conditions that characterize generalized Sasakian structures were given in Theorem 24 of \cite{V3}.

{\small Department of Mathematics, University of Haifa, Israel.\\ E-mail: vaisman@math.haifa.ac.il}
\end{document}